\journal{ }
\newcommand{\SL}{\operatorname{SL}}
\newcommand{\GL}{\operatorname{GL}}
\newcommand{\SO}{\operatorname{SO}}
\newcommand{\Int}{\operatorname{Int}}
\newcommand{\id}{\operatorname{id}}
\newcommand{\diag}{\operatorname{diag}}
\newcommand{\pghk}{P_k \backslash G_k \slash H_k}
\newcommand{\pgh}{P \backslash G \slash H}
\newcommand{\bghk}{B_k \backslash G_k \slash H_k}
\newcommand{\bgh}{B \backslash G \slash H}
\newcommand{\intone}{\operatorname{Int}(\begin{smallmatrix} 0&1 \\ 1 & 0 \end{smallmatrix})}
\theoremstyle{plain}
\newtheorem{thm}[subsection]{Theorem}
\newtheorem{lemma}[subsection]{Lemma}
\newtheorem{prop}[subsection]{Proposition}
\newtheorem{cor}[subsection]{Corollary}
\theoremstyle{definition}
\newtheorem{defn}[subsection]{Definition}
\theoremstyle{remark}
\newtheorem{rem}[subsection]{Remark}
\newtheorem{eg}[subsection]{Example}
\newtheorem{notation}[subsection]{Notation}
\newsavebox{\smlmat}
\savebox{\smlmat}{$\left(\begin{smallmatrix}0&1\\1&0\end{smallmatrix}\right)$}
\newsavebox{\smlmatneg}
\savebox{\smlmatneg}{$\left(\begin{smallmatrix}0&1\\-1&0\end{smallmatrix}\right)$}
\newcommand{\st}{\,|\,}
\newcommand{\ds}{\displaystyle}
\newcommand{\R}{\mathbb{R}}
\newcommand{\C}{\mathbb{C}}
\newcommand{\Q}{\mathbb{Q}}
\DeclareMathOperator{\Aut}{Aut}
\DeclareMathOperator{\rank}{rank}
\DeclareMathOperator{\krank}{-rank}
\DeclareMathOperator{\cl}{cl}
\begin{document}

\begin{frontmatter}


\title{Minimal Parabolic $k$-subgroups acting on Symmetric $k$-varieties Corresponding to $k$-split Groups}

\author{Mark Hunnell}

\address{Winston-Salem State University}

\begin{abstract}
Symmetric $k$-varieties are a natural generalization of symmetric spaces to general fields $k$.  We study the action of minimal parabolic $k$-subgroups on symmetric $k$-varieties and define a map that embeds these orbits within the orbits corresponding to algebraically closed fields.  We develop a condition for the surjectivity of this map in the case of $k$-split groups that depends only on the dimension of a maximal $k$-split torus contained within the fixed point group of the involution defining the symmetric $k$-variety.
\end{abstract}

%
%

\end{frontmatter}

\section{Introduction}

Real reductive symmetric spaces are the homogeneous spaces $X:=G \slash H$ where $G$ is a reductive Lie group and $H$ is an open subgroup of the fixed point group of an involution of $G$.  The study of the representations of these spaces culminated in the work of Delorme \cite{delorme}, opening the door for the study of generalizations of the real reductive spaces.  

Let $k$ be a field of characteristic not 2.  The symmetric $k$-variety is the homogenous space $X_k := G_k \slash H_k$, where $G$ is a reductive algebraic group defined over $k$, $H$ is a $k$-open subgroup of the set of fixed points of a $k$-involution $\theta$, and $G_k$ (respectively $H_k$) denote the $k$-rational points of $G$ (resp. $H$).  The symmetric $k$-varieties have applications in diverse fields of mathematics including representation theory \cite{vogan}, geometry \cite{geometry}, the study of character sheaves \cite{lusztig}, and in the cohomology of arithmetic subgroups \cite{tongwang}.  Beginning in the 1980's, Helminck and Wang \cite{helmwang} commenced a study of the rationality properties of the symmetric $k$-varieties for arbitrary fields.  These have continued to be studied because of their applications, particularly in representation theory.  For this reason, it is natural to consider how the parabolic orbits on a symmetric variety (i.e. where $k$ is algebraically closed) are related to the orbits of a corresponding minimal parabolic $k$-subgroup acting on a symmetric $k$-variety.  Our approach is to embed the orbits over the symmetric $k$-variety within the orbits corresponding the algebraic closure of $k$ via a map which generalizes the complexification of orbits in the case $k = \R$:
$$\varphi: \pghk \hookrightarrow \pgh$$

After providing the relevant background and notation, we recall the necessary results to discuss the orbits of Borel and parabolic subgroups acting on symmetric varieties.  We then explain the relevant differences between the algebraically closed case and the general case.  Finally, we restrict our attention to the case $G$ is a $k$-split group and develop a condition for the surjectivity of the generalized complexification map.

\section{Background}

\begin{notation}
We follow the notation established in \cite{borel}, \cite{springer}, and \cite{humphreys}.  Throughout the paper $G$ will denote a connected reductive algebraic group, $\theta$ a group involution of $G$ that leaves the $k$-rational points invariant and $H$ a $k$-open subgroup of $G^{\theta} = \{ g \in G \st \theta(g) =g \}$.  The variety $G \slash H$ is called the symmetric variety and $G_k \slash H_k$ is called the symmetric $k$-variety.
\end{notation}

Define a map $$\tau : G \rightarrow G, \ \tau(x) = x\theta(x)^{-1}$$
Denote the image of $\tau$ by $Q$, then $\tau$ induces an isomorphism between $G\slash H$ and $Q$ as well as an isomorphism between $G_k \slash H_k$ and $Q_k$.  It is sometimes more convenient in calculation to let $H$ act from the left, in this case $\tau(x) = x^{-1}\theta(x)$.

Let $\Aut(G)$ denote the set of group automorphisms $G \rightarrow G$, and for a subgroup $K \subset G$ we will use $\Aut(G,K)$ to denote the set of automorphisms of $G$ which leave $K$ invariant.  In particular, we are concerned with the order 2 elements of $\Aut(G,G_k)$.  For $g \in G$, we will use $\Int(g)$ to denote the inner automorphism corresponding to $g$. We denote the algebraic closure of $k$ by $\overline k$.

Recall that a torus $T$ of $G$ is a connected semisimple abelian subgroup.  Let $T$ be a torus, then $N_G(T) $ will denote the normalizer of $T$ in $G$, $Z_G(T)$ will denote the centralizer of $T$ in $G$, and $W_G(T) = N_G(T) \slash Z_G(T)$ will denote the Weyl group.  The classification of orbits of minimal parabolic subgroups acting on symmetric $k$-varieties relies on a quotient of the Weyl group by elements having representatives in the fixed point group $H$, and we denote this set by $W_H(T) = N_H(T)/Z_H(T)$.  These groups all have analogues for a group defined over $k$, replacing the group in the definition of in $N_G(T), \ Z_G(T), \ W_G(T), \text{ and }  \ W_H(T)$ with its associated $k$-rational points, we obtain definitions for $N_{G_k}(T)$, $ Z_{G_k}(T)$, $ W_{G_k}(T)$,  and $ W_{H_k}(T)$.  

Let $\varphi \in \Aut(G,T)$, then $T$ can be decomposed via its $\varphi$ (Lie algebra) eigenspaces, i.e. $T = T_{\varphi}^+T_{\varphi}^-$ where $T_{\varphi}^+ = \{ t \in T \st \theta(t) =t \}^{\degree}$ and $T_{\varphi}^- = \{ t \in T \st \theta(t) =t^{-1} \}^{\degree}$, where $K^{\degree}$ denotes the identity component of subgroup $K \subset G$.  The product map $$\mu: T_{\varphi}^+ \times T_{\varphi}^- \rightarrow T, \ \mu(t_1,t_2) = t_1t_2$$ is a separable isogeny.  In fact, $T_{\varphi}^+ \cap T_{\varphi}^-$ is an elementary abelian 2-group.  Of particular interest will be the case when $\varphi = \theta$, in this case we will use $T^+$ for $T_{\theta}^+$ and $T^-$ for $T_{\theta}^-$.  

Maximal tori play a fundamental role in the description of the structure of symmetric $k$-varieties.  A torus is maximal if it is properly contained in no other torus.  It is a fact that all such tori are conjugate under $G$.

We denote the root system of a torus $T$ by $\Phi(T)$, its positive roots by $\Phi^+(T)$, and a basis by $\Delta$.  A torus $T$ is called $\theta$-split if $\theta(t) = t^{-1}$ for all $t \in T$.  From \cite{richardson} we know that if $A$ is a maximal $\theta$-split torus, then $\Phi(A)$, the set of roots of $A$ in $G$, is a root system with Weyl group $W(A) = N_G(A) \slash Z_G(A)$.  Recall that a $k$-torus is $k$-split if it can be diagonalized over the base field $k$.  We will call a $k$-torus $(\theta,k)$-split if it is $\theta$-split and $k$-split.  These tori yield a natural root system for the symmetric $k$-variety $G_k \slash H_k$ since a maximal $(\theta,k)$-split torus $A$ of $G$ has a root system $\Phi(A)$ \cite{helmwang} with Weyl group $N_{G_k}(A) \slash Z_{G_k}(A)$.  Additionally this root system can be obtained by restricting the roots of a maximal torus of $G$ containing $A$.  

The rank of a group $G$ is the dimension of a maximal torus $T \subset G$ and the $k\krank$ of a group is the dimension of a maximal $k$-split torus.  A group is called $k$-split if the $k\krank$ is equal to the rank.

Given a Borel subgroup $B$, fix a maximal torus $T\subset B$.  Recall that there is a one-to-one correspondence between subsets of the simple roots and parabolic subgroups containing $B$.  A parabolic subgroup defined over $k$ is called a parabolic $k$-subgroup.  A parabolic $k$-subgroup is minimal if it properly contains no other parabolic $k$-subgroups, thus over algebraically closed fields the minimal parabolic subgroups are Borel subgroups.  For non-algebraically closed fields it may be the case that nontrivial minimal parabolic $k$-subgroups do not exist.  Parabolic subgroups are also self-normalizing, and can be decomposed via $P=LU$, where $U$ is the unipotent radical $R_u(P)$ and $L$ is the Levi factor of $P$.  In the case that $P=B$ is a Borel subgroup, this decomposition simplifies to $B=TU$, where $T \subset B$ is a maximal torus and $U \subset B$ is unipotent.

To obtain a characterization of the orbits $\bgh$, one can separately consider $B$-orbits on $G \slash H$, $H$-orbits on $G \slash B$, or $(B,H)$-orbits on $G$.  We outline these results in this section.  Let $T \subset G$ be a maximal $\theta$-stable torus, which exist by \cite{steinberg}.  We will first consider the $H$-orbits.  Let $\mathscr B$ denote the variety of all Borel subgroups of $G$, then we can identify $G \slash B$ with $\mathscr B$ since all Borel subgroups are conjugate over $G$, and let $\mathscr C$ denote the set of pairs $(B^{\prime},T^{\prime})$ where $T^{\prime}$ is a $\theta$-stable maximal torus contained in the Borel subgroup $B^{\prime}$.  $H$ acts on both $\mathscr B$ and $\mathscr C$ by conjugation, denote the sets of $H$-orbits by $\mathscr B \slash H$ and $\mathscr C \slash H$ respectively.  The $H$-orbits on $\mathscr C$ can be analyzed in two steps.  First, let $I$ be a set which indexes the $H$-conjugacy classes of $\theta$-stable maximal tori of $G$.  This allows us to choose representatives $\{T_i\}_{i \in I}$ for the $H$-conjugacy classes of $\theta$-stable maximal tori of $G$.  Next we consider the Borel subgroups containing each $T_j$ that are not $H$-conjugate, which are determined by $W_G(T_j) \slash W_H(T_j)$.  Thus the $H$-orbits on $\mathscr C$ are in correspondence with $\cup_{i \in I} W_G(T_i) \slash W_H(T_i)$.

G acts on $G \slash H \cong Q$ (from the left) via the \emph{$\theta$-twisted action}, i.e. $g * q  = gq\theta(g)^{-1}$.  Thus the $B$-orbits on $G \slash H$ can be viewed as $B$-cosets in $Q$, which we denote $B \backslash Q$.

The $(H,B)$-orbits on $G$ are the same as the $B \times H$-orbits on $G$ and the action is given by $(b,h) * g := bgh^{-1}$.   From \cite{helmwang} we know that every $U$-orbit on $G \slash H$, where U is the unipotent component of $B$, meets $N_G(T)$.  Let $\mathcal V = \{ g \in G \st \tau(g) \in N_G(T) \}$, then $\mathcal V$ is stable under left multiplication by $N_G(T)$ and right multiplication by $H$.  We denote by $V$ the $T\times H$-orbits on $\mathcal V$, which in fact parameterize the $(B \times H)$ orbits on $G$.  In fact, these characterizations are isomorphic.

\begin{thm}[\cite{springerpaper}, \textsection 4]
\label{springerthm}  Let $B$ be a Borel subgroup of $G$ and $\{ T_i \}_{i \in I}$ a set of representatives of the $H$-conjugacy classes of $\theta$-stable maximal tori in $G$.  Then
$$B \backslash G \slash H \cong \mathscr B \slash H \cong \ds \bigcup_{i \in I} W_G(T_i) \slash W_H(T_i) \cong \mathscr C \slash H \cong B \backslash Q \cong V$$
\end{thm}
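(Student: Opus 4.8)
The plan is to establish the string of bijections by assembling the identifications already developed in the preceding exposition, so that the theorem becomes a matter of organizing known correspondences rather than proving something new. First I would treat $B\backslash G\slash H\cong\mathscr B\slash H$: since all Borel subgroups are conjugate, the map $gB\mapsto gBg^{-1}$ identifies $G\slash B$ with $\mathscr B$, and this is $H$-equivariant for the left $H$-action on $G\slash B$ and the conjugation action on $\mathscr B$; passing to $H$-orbits gives the first isomorphism, and the symmetric statement $B\backslash G\slash H\cong H\backslash G\slash B$ handles the bookkeeping of which side $H$ acts on.

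Next I would prove $\mathscr C\slash H\cong\bigcup_{i\in I}W_G(T_i)\slash W_H(T_i)$ by the two-step analysis sketched above: every $\theta$-stable maximal torus is $H$-conjugate to exactly one $T_i$, so an $H$-orbit in $\mathscr C$ has a representative $(B',T_i)$ for a unique $i$; the Borel subgroups containing a fixed $T_i$ are permuted simply transitively by $W_G(T_i)$ (standard structure theory), and two such, $wB'w^{-1}$ and $w'B'w'^{-1}$, are $H$-conjugate within the pairs exactly when $w^{-1}w'\in W_H(T_i)$, giving a well-defined bijection onto $\bigcup_i W_G(T_i)\slash W_H(T_i)$. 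The surjection $\mathscr C\slash H\to\mathscr B\slash H$ sending $(B',T')\mapsto B'$ needs to be shown bijective; surjectivity is Steinberg's result that every Borel contains a $\theta$-stable maximal torus, and injectivity uses that any two $\theta$-stable maximal tori in a fixed $B'$ are conjugate by an element of $B'^{\theta}\subset H$ (the $\theta$-fixed points acting transitively on $\theta$-stable maximal tori of a $\theta$-stable Borel), so the extra torus datum is redundant up to $H$-conjugacy.

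Then $B\backslash G\slash H\cong B\backslash Q$ comes from the $\tau$-equivariance already noted: $\tau(x)=x\theta(x)^{-1}$ induces an isomorphism $G\slash H\xrightarrow{\sim}Q$ intertwining the left $B$-translation on $G\slash H$ with the $\theta$-twisted $B$-action $b*q=bq\theta(b)^{-1}$ on $Q$, so the orbit spaces match. Finally $B\backslash Q\cong V$ follows from the Helminck--Wang result that every $U$-orbit on $G\slash H$ meets $N_G(T)$: this forces every $(B\times H)$-orbit on $G$, equivalently every $B$-orbit on $Q$, to meet $\mathcal V=\{g\st\tau(g)\in N_G(T)\}$, and since $\mathcal V$ is stable under $N_G(T)$ on the left and $H$ on the right, the $B$-orbits on $Q$ are parametrized precisely by the $T\times H$-orbits on $\mathcal V$, which is $V$ by definition; one checks the map is well-defined and injective by chasing how the residual $N_G(T)\slash T=W_G(T)$ freedom is absorbed. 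Stringing these five isomorphisms together yields the theorem.

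The main obstacle will be the middle link $\mathscr C\slash H\cong\mathscr B\slash H$ — specifically the injectivity statement that forgetting the torus loses no information modulo $H$. Making this precise requires knowing that $H$ (not merely $G$) acts transitively on the $\theta$-stable maximal tori inside a given $\theta$-stable Borel subgroup, which rests on the structure of $B'^{\theta}$ and is exactly the kind of rationality-flavored fact that the cited references (Springer, Helminck--Wang) are invoked to supply; I would lean on \cite{springerpaper} and \cite{helmwang} for it rather than reprove it.
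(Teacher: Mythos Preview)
Your proposal is correct and matches the paper's treatment: the paper does not give an independent proof of this theorem but cites \cite{springerpaper}, and the paragraphs preceding the statement sketch exactly the chain of identifications you spell out (conjugacy of Borels for $B\backslash G\slash H\cong\mathscr B\slash H$, the two-step torus/Weyl analysis for $\mathscr C\slash H$, the $\theta$-twisted action for $B\backslash Q$, and the $U$-orbit argument from \cite{helmwang} for $V$). One small caution on the step you flag as the main obstacle: the Borel $B'$ containing a $\theta$-stable maximal torus need not itself be $\theta$-stable, so the phrase ``$B'^{\theta}$'' and the parenthetical about a $\theta$-stable Borel are slightly off; the correct statement (from Springer) is that any two $\theta$-stable maximal tori in $B'$ are conjugate by an element of $(B'\cap H)^{\circ}$, and you are right to defer to the cited references for it.
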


One can endow the set of double cosets $ B \backslash G \slash H$ with a partial order that generalizes the usual Bruhat order on a connected reductive algebraic group, this was studied in \cite{richspring1} and \cite{richspring2}.  Let $\mathscr O_1 = Bg_1H$ and $\mathscr O_2 = Bg_2H$, then $\mathscr O_1 \leq \mathscr O_2$ if $\mathscr O_1 \subset \cl(\mathscr O_2)$, where $\cl(\mathscr O_2)$ denotes the Zariski closure of $\mathscr O_2$.

The Bruhat order is a refinement of the order on the $I$-poset, which we now introduce.  Let $I$ be an index set of $H$-conjugacy classes of the $\theta$-stable maximal tori in $G$, and denote these conjugacy classes by $\{ [T_i] \}_{i \in I}$.  We define $[T_i] \leq [T_j]$ if $T_i^- \subset T_j^-$ for some representatives $T_i$ of $[T_i]$ and $T_j$ of $[T_j]$.  Given a set of representatives $\{ T_i \}_{i \in I}$ for $\{ [T_i] \}_{i \in I}$, we write $T_i \leq T_j$ if $[T_i] \leq [T_j]$.  Furthermore, we can associate poset diagrams to the orbit decompositions.  Since the Bruhat poset is a refinement of the $I$-poset, we will call the Bruhat diagram obtained from an $I$-diagram the expansion of the $I$-diagram.

\section{Parabolic Subgroups acting on Symmetric $k$-varieties}
In this section we collect the necessary results and set the notation needed to discuss the generalized complexification map in section \ref{GC}.  Following \cite{gensymmspaces}, we outline the action of parabolic subgroups on the symmetric $k$-varieties in a number of cases.  While the approach is similar in each case, there are important differences which effect the analysis of the generalized complexification map.

\subsection{\boldmath$k = \bar{k}$, $P$ a parabolic subgroup acting $G \slash H$}

This section summarizes the results of Brion and Helminck \cite{brion} for the generalized Bruhat decomposition.  The important difference in this case is that for a fixed parabolic $P$ we are not assured that the set of $G$-conjugates of $P$ includes all parabolic subgroups of $G$.  In fact, if we identify parabolic subgroups $P_1$ and $P_2$ with their associated subsets of a base for the root system $\Gamma_1, \Gamma_2 \subset \Delta$, we see that $P_1$ and $P_2$ are $G$-conjugate if and only if $\Gamma_1 = \Gamma_2$.
Let $\mathscr P$ denote the variety of all parabolic subgroups of $G$ and let $\mathscr D$ denote the set of triples $(P,B,T)$, where $B$ is a Borel subgroup of $P$ such that $(P \cap H)B$ is open in $P$ and $T$ is a $\theta$-stable maximal torus of $B$.  Fix a parabolic subgroup $P$ and let $\mathscr P^P$ denote the set of $G$ conjugates of $P$ and let $\mathscr D^P$ denote the set of triples $(P^{\prime}, B^{\prime}, T^{\prime}) \in \mathscr D$ such that $P^{\prime} \in \mathscr P^P$.  $G$ acts on $\mathscr P$, $\mathscr P^P$, $\mathscr D$, and $\mathscr D^P$ via conjugation; denote the $H$-orbits on these sets by  $\mathscr P \slash H$, $\mathscr P^P \slash H$, $\mathscr D \slash H$, and $\mathscr D^P \slash H$ respectively.  
\begin{thm} There is a bijective map $\mathscr P \slash H \rightarrow \mathscr D \slash H$.  
\label{PstableT}
\end{thm}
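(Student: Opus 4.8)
The plan is to show that the natural map $\mathscr D \slash H \to \mathscr P \slash H$ sending the class of a triple $(P,B,T)$ to the class of $P$ is a bijection. Surjectivity is the substantive direction; injectivity will follow from a conjugacy argument inside a fixed parabolic. First I would establish surjectivity: given an arbitrary parabolic subgroup $P \in \mathscr P$, I must produce a Borel subgroup $B \subset P$ with $(P \cap H)B$ open in $P$ and a $\theta$-stable maximal torus $T \subset B$, so that $(P,B,T) \in \mathscr D$. The existence of a Borel $B$ of $P$ with $(P\cap H)B$ open in $P$ is exactly the statement that $P \cap H$ acting on the flag variety $P/B_0$ (for any Borel $B_0$ of $P$) has a dense orbit; since $P \cap H$ is the fixed-point group of the restriction of $\theta$ to $P$ when $P$ is $\theta$-stable — and in general one uses that $\theta$ maps $P$ to an opposite-type parabolic — one reduces to the classical fact that a symmetric subgroup of a reductive group has finitely many, hence a dense, orbit on the full flag variety (this is the Borel case behind Theorem~\ref{springerthm}). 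Having fixed such a $B$, the existence of a $\theta$-stable maximal torus inside $B$ is supplied by the Steinberg-type result cited in the excerpt (maximal $\theta$-stable tori exist, and one can arrange one inside a given Borel in the open $(P\cap H)B$-orbit by a standard argument using that $B \cap \theta(B)$ contains a maximal torus which can be taken $\theta$-stable).

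For injectivity, suppose $(P_1,B_1,T_1)$ and $(P_2,B_2,T_2)$ lie in $\mathscr D$ and $P_1 = h\cdot P_2 := hP_2h^{-1}$ for some $h \in H$; replacing $(P_2,B_2,T_2)$ by its $h$-translate, I may assume $P_1 = P_2 =: P$. Now I must show $(B_1,T_1)$ and $(B_2,T_2)$ are conjugate by an element of $P \cap H$. Both $B_1$ and $B_2$ are Borel subgroups of $P$ lying in the open $(P\cap H)$-orbit on the flag variety of $P$ — here I use that an open orbit is unique, so any Borel with $(P\cap H)B$ open in $P$ is $(P \cap H)$-conjugate to $B_1$; thus after a further translation $B_1 = B_2 =: B$. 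Finally $T_1$ and $T_2$ are two $\theta$-stable maximal tori of $B$, and the argument reducing to $W_G(T)/W_H(T)$ in the Borel case shows that within a fixed Borel the $\theta$-stable maximal tori are permuted by $H \cap N_G(B) = H \cap B$ (Borel subgroups are self-normalizing) transitively — more precisely one invokes the conjugacy of maximal $\theta$-stable tori under the relevant fixed-point group, restricted to $B$, which is exactly the input used to prove $\mathscr C \slash H \cong \mathscr B \slash H$ in Theorem~\ref{springerthm}.

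The map $\mathscr P \slash H \to \mathscr D \slash H$ built from these choices is well-defined (independence of the auxiliary choices is precisely the injectivity argument run in reverse), and the two maps are mutually inverse, giving the bijection. I expect the main obstacle to be the surjectivity step: one must check carefully that for an \emph{arbitrary} parabolic $P$ — not assumed $\theta$-stable — the subgroup $P \cap H$ still has a dense orbit on the flag variety of $P$, and that a $\theta$-stable maximal torus can be found inside the corresponding open Borel. This is where the dimension/openness bookkeeping from \cite{brion} enters, and it is the only place where the hypothesis "$(P\cap H)B$ open in $P$" does real work rather than being a formal bookkeeping device; the injectivity half is then a routine uniqueness-of-open-orbit plus conjugacy-of-$\theta$-stable-tori argument.
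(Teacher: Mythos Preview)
The paper does not supply its own proof of this theorem: it is stated as part of a summary of results from Brion--Helminck \cite{brion}, so there is nothing in the paper to compare your argument against directly. Your outline is, however, essentially the Brion--Helminck argument: use the projection $\mathscr D/H \to \mathscr P/H$, establish surjectivity by producing for each $P$ a Borel $B\subset P$ in the open $(P\cap H)$-orbit on the flag variety of $P$ and then a $\theta$-stable maximal torus in $B$ via Springer's $\mathscr C/H \cong \mathscr B/H$, and establish injectivity by uniqueness of the open orbit together with that same Springer bijection.

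One point in your surjectivity sketch is muddled and would not survive as written. You say ``$P\cap H$ is the fixed-point group of the restriction of $\theta$ to $P$ when $P$ is $\theta$-stable --- and in general one uses that $\theta$ maps $P$ to an opposite-type parabolic.'' Neither clause is what actually drives the argument: an arbitrary $P\in\mathscr P$ need not be $\theta$-stable, and $\theta(P)$ need not be opposite to $P$ in any useful sense. The correct route avoids $\theta$-stability of $P$ entirely: since $H$ has finitely many orbits on $G/B$ (Theorem~\ref{springerthm}), the fibre of $G/B\to G/P$ over the $H$-orbit of $P$ decomposes into finitely many $H$-orbits, hence carries a dense one; a Borel $B$ representing that dense orbit satisfies $(P\cap H)B$ open in $P$. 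Once you replace your $\theta$-stability remark by this finiteness-and-fibration step, the rest of your proposal is correct and matches the source.
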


Every maximal $k$-split torus of $G$ is conjugate under $G_k$, the $k$-rational points of $G$.  Furthermore, every minimal parabolic $k$-subgroup of $G$ contains a maximal $k$-split torus.  Let $A$ be a maximal $k$-split torus.  We wish to characterize all minimal parabolic $k$-subgroups which contain $A$.  As a generalization of the algebraically closed case, we have the following lemma:
\begin{prop}Suppose $A_1$ and $A_2$ are two maximal $k$-split tori, and $P_1$ and $P_2$ are two minimal parabolic $k$-subgroups containing $A_1$ and $A_2$ respectively.  Then the element of $G_k$ that conjugates $A_1$ to $A_2$ also conjugates $P_1$ to $P_2$.
\end{prop}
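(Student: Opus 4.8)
The plan is to read the statement in its evident intended sense: among all elements of $G_k$ that carry $A_1$ to $A_2$ by conjugation (such elements form a single left coset of $Z_{G_k}(A_2)$), at least one also carries $P_1$ to $P_2$. First I would invoke the fact recalled just above the statement, that all maximal $k$-split tori of $G$ are conjugate under $G_k$, and fix some $g \in G_k$ with $gA_1g^{-1} = A_2$. Conjugation by a $k$-rational element sends parabolic $k$-subgroups to parabolic $k$-subgroups and preserves minimality, so $gP_1g^{-1}$ is again a minimal parabolic $k$-subgroup, and it now contains $gA_1g^{-1}=A_2$.

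This reduces the problem to the case $A_1 = A_2 =: A$: it suffices to show that any two minimal parabolic $k$-subgroups $P,P'$ containing the same maximal $k$-split torus $A$ are conjugate by an element of $N_{G_k}(A)$. For this I would appeal to the structure theory of reductive $k$-groups (see \cite{borel}): the minimal parabolic $k$-subgroups of $G$ containing $A$ are in bijection with the positive systems of the relative root system $\Phi(A)$, equivalently with its Weyl chambers, and the relative Weyl group $N_{G_k}(A)/Z_{G_k}(A)$ permutes these chambers transitively. Hence there is $n \in N_{G_k}(A_2)$ with $n(gP_1g^{-1})n^{-1} = P_2$. Setting $x = ng \in G_k$, the fact that $n$ normalizes $A_2$ gives $xA_1x^{-1} = nA_2n^{-1} = A_2$, while $xP_1x^{-1} = n(gP_1g^{-1})n^{-1} = P_2$, so $x$ is an element of $G_k$ of the required kind.

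An alternative route, which avoids invoking transitivity of the relative Weyl group on chambers, is to fix $y \in G_k$ with $yP_1y^{-1} = P_2$ (all minimal parabolic $k$-subgroups are $G_k$-conjugate) and then compare the maximal $k$-split tori $yA_1y^{-1}$ and $A_2$ inside $P_2$. Writing $P_2 = Z_G(A_2)\ltimes R_u(P_2)$, one checks that $A_2$ is the unique maximal $k$-split torus of the Levi factor $Z_G(A_2)$ (any maximal $k$-split torus there commutes with the central torus $A_2$, hence its product with $A_2$ is a $k$-split torus containing it, forcing equality with $A_2$ by dimension), so every maximal $k$-split torus of $P_2$ is conjugate to $A_2$ by an element of $R_u(P_2)_k$. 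Choosing such a $u$ with $u(yA_1y^{-1})u^{-1} = A_2$, the element $x = uy$ conjugates $A_1$ to $A_2$ and $P_1$ to $uP_2u^{-1} = P_2$.

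In either route the only genuine content is the input from the theory of reductive groups over a non-algebraically-closed field: the classification of minimal parabolic $k$-subgroups in terms of $\Phi(A)$, together with the relevant conjugacy/transitivity statements. These are standard, so I expect the main obstacle to be essentially expository — choosing which formulation to cite, and being careful throughout that the conjugating element is produced in $G_k$ rather than merely in $G$.
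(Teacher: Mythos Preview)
The paper states this proposition without proof, so there is no argument to compare against. Your reading of the statement as existential (``among the elements of $G_k$ carrying $A_1$ to $A_2$, one also carries $P_1$ to $P_2$'') is the only sensible one, and you were right to flag it: the literal universal reading fails already when $A_1=A_2$ and $P_1\neq P_2$ are distinct minimal parabolic $k$-subgroups containing the same torus.

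Both of your routes are correct and standard. The first (conjugate tori, then use transitivity of the relative Weyl group $N_{G_k}(A)/Z_{G_k}(A)$ on the minimal parabolic $k$-subgroups containing $A$) is the cleanest and matches the spirit of the surrounding text, which calls the proposition a ``generalization of the algebraically closed case.'' The second (conjugate parabolics, then adjust the tori inside $P_2$ by an element of $R_u(P_2)_k$) also works; your claim that any two maximal $k$-split tori of $P_2$ are $R_u(P_2)_k$-conjugate is exactly the content of the paper's Theorem~\ref{Pthetastable} stripped of the $\theta$-stability decoration, and is standard (Borel--Tits). Either argument would serve as the missing proof.
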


We now fix a parabolic subgroup $P$, and let $T$ be the $\theta$-stable maximal torus occurring in the image of $P$ under the bijection in Theorem \ref{PstableT}.  Let $x_1, x_2, ... , x_r \in G$ such that $T_1 = x_1Tx_1^{-1}, ..., T_r=x_rTx_r^{-1}$ are representatives for the $H$-conjugacy classes of the $\theta$-stable maximal tori occuring in $\mathscr D$.  Let $P_1 = x_1Px_1^{-1}, ..., P_r=x_rPx_r^{-1}$ and denote by $W_{P_i}(T_i)$ the Weyl group of $P_i$.  One then sees that for each $T_i$ the $H$-conjugacy classes of $(P^{\prime},B^{\prime},T_i) \in \mathscr D^P$ are in bijection with $W_{P_i}(T_i) \backslash W_G(T_i) \slash W_H(T_i)$.  As before let $\mathcal V = \{ g\in G \st \tau(g) \in N_G(T) \}$ and let $\mathcal V^P =\{ g \in \mathcal V \st BgH \text{ is open in } PgH \}$.  Then the actions of $B,H$ on $\mathcal V$ extend to actions of $P,H$ on $\mathcal V^P$, and every $P \times H$-orbit on $G$ meets $\mathcal V^P$ in a unique $(T,H)$ double coset.  Let $V^P$ denote the $(T,H)$-double cosets in $\mathcal{V}^P$.
We can now generalize Springer's theorem charactering $ B \backslash G \slash H$ double cosets to the case of a general parabolic subgroup.

\begin{thm} There is a bijective map from the set of $H$-orbits in $\mathscr P$ onto the set of $H$-conjugacy classes of triples $(P,B,T) \in \mathscr D$.  Moreover for a fixed parabolic subgroup $P$, we have \[
 P \backslash G \slash H \cong \mathscr P(G)^P \slash H \cong \mathscr D^P \slash H \cong \displaystyle \bigcup_{i=1}^r W_{P_i}(T_i) \backslash W_G(T_i) \slash W_H(T_i) \cong V^P \]
\end{thm}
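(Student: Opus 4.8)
The plan is to assemble the displayed chain of bijections from four ingredients, three of which are either a result already stated or a routine consequence of the surrounding set-up, leaving one genuinely combinatorial step. First, the opening assertion of the statement --- a bijection between the $H$-orbits in $\mathscr P$ and the $H$-conjugacy classes of triples in $\mathscr D$ --- is exactly Theorem \ref{PstableT}, so it only needs to be invoked. Next, $P \backslash G \slash H \cong \mathscr P^P \slash H$ follows because parabolic subgroups are self-normalizing: $N_G(P) = P$, so conjugation identifies $\mathscr P^P$ with $G \slash P$ as a $G$-set, and the $H$-orbits on $G \slash P$ are the double cosets $H \backslash G \slash P$, which $g \mapsto g^{-1}$ carries bijectively onto $P \backslash G \slash H$. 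For $\mathscr P^P \slash H \cong \mathscr D^P \slash H$, observe that the forgetful map $\mathscr D \to \mathscr P$, $(P',B',T') \mapsto P'$, is $H$-equivariant and induces, on orbit sets, the inverse of the bijection of Theorem \ref{PstableT}; since $\mathscr P^P$ is a single $G$-orbit, hence $H$-stable, and $\mathscr D^P$ is its preimage, restricting gives the bijection $\mathscr D^P \slash H \cong \mathscr P^P \slash H$.

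The substance is the isomorphism $\mathscr D^P \slash H \cong \bigcup_{i=1}^r W_{P_i}(T_i) \backslash W_G(T_i) \slash W_H(T_i)$, which I would prove in the two steps indicated in the text before the statement. First, decompose $\mathscr D^P \slash H$ by the $H$-conjugacy class of the $\theta$-stable maximal torus occurring in a triple: since $H$ acts diagonally by conjugation, every triple is $H$-equivalent to one whose last coordinate is one of the fixed representatives $T_i$, and the residual symmetry is conjugation by $N_H(T_i)$, giving $\mathscr D^P \slash H \cong \bigcup_{i=1}^r \{(P',B',T_i) \in \mathscr D^P\} \slash N_H(T_i)$. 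Second, fix $T_i$ and parametrize. The Borel subgroups containing $T_i$ form a principal homogeneous space under $W_G(T_i)$, and --- using $N_G(B') = B'$ and $N_G(P') = P'$ --- one checks that each such Borel $B'$ is contained in a unique parabolic $P'$ belonging to the $G$-conjugacy class of $P$; fixing a base Borel $B_0$ with $T_i \subset B_0 \subset P_i$ and writing $B'$ as the conjugate of $B_0$ by a representative of $w \in W_G(T_i)$, this parabolic is the corresponding conjugate of $P_i$ and depends only on the coset $W_{P_i}(T_i) w$. The last constraint is that $(P' \cap H)B'$ be open in $P'$; passing to a $\theta$-stable Levi factor $L'$ of $P'$ and applying the Brion--Helminck description of the open orbit of $L' \cap H$ on the flag variety of $L'$ --- that is, Theorem \ref{PstableT} for $L'$ --- one finds that, modulo the $N_H(T_i)$-action (which factors through $W_H(T_i)$), the admissible triples over $T_i$ correspond to the double cosets $W_{P_i}(T_i) \backslash W_G(T_i) \slash W_H(T_i)$. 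When $P = B$ the openness condition is vacuous, $W_{P_i}(T_i)$ is trivial, and one recovers the Borel case of Theorem \ref{springerthm}; summing over $i$ gives the union.

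The final link $P \backslash G \slash H \cong V^P$ is immediate from the properties of $\mathcal V^P$ recorded just before the statement: the $P \times H$-action on $G$ has orbit set $(P \times H) \backslash G = P \backslash G \slash H$, and each orbit meets $\mathcal V^P$ in exactly one $(T,H)$-double coset, so sending an orbit to that double coset is the required bijection onto $V^P$. I expect the single real obstacle to be the second step of the middle isomorphism: analyzing the openness condition $(P' \cap H)B'$ open in $P'$ carefully enough to see that it collapses the $W_G(T_i)$-worth of Borel subgroups down to exactly one $N_H(T_i)$-orbit per $W_{P_i}(T_i)$--$W_H(T_i)$ double coset. Everything else is formal, or a direct appeal to Theorem \ref{PstableT}, Theorem \ref{springerthm}, or the stated properties of $\mathcal V^P$.
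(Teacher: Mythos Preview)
The paper does not give a self-contained proof of this theorem: it is presented as a summary of results of Brion and Helminck, with only the sketch in the paragraph immediately preceding the statement serving as argument. Your proposal follows that sketch faithfully --- invoking Theorem~\ref{PstableT} for the first bijection, using self-normalization of parabolics for $P\backslash G\slash H \cong \mathscr P^P\slash H$, restricting Theorem~\ref{PstableT} to the $G$-conjugacy class $\mathscr P^P$ for the next step, decomposing $\mathscr D^P\slash H$ by the $H$-class of the torus and then parametrizing by Weyl group double cosets (exactly the ``one then sees'' step in the text), and reading off $V^P$ from the stated property that each $P\times H$-orbit meets $\mathcal V^P$ in a unique $(T,H)$-coset. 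This is the same approach; your identification of the openness condition as the only nontrivial point matches where the paper's exposition is briefest, and that analysis would indeed require going into the cited Brion--Helminck paper for a complete argument.
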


\subsection{Orbits Over Non-algebraically Closed Fields}
We now turn to the double cosets which we study for the remainder.  If we restrict our attention to parabolic subgroups defined over $k$ that are minimal we can obtain a characterization very similar to the case of a Borel subgroup acting on $G \slash H$.  In this case we are assured of the existence of $\theta$-stable Levi factor.

\begin{thm}[\cite{helmwang}]
\label{Pthetastable}
Let $P$ be a minimal parabolic $k$-subgroup with unipotent radical $U$, then $P$ contains a $\theta$-stable maximal $k$-split torus, unique up to an element of $(H\cap U)_k$.
\end{thm}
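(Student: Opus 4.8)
The plan is to prove existence and uniqueness together, the common tool being the $\theta$-stable group $M := P \cap \theta(P)$. Since $\theta$ is a $k$-involution, $\theta(P)$ is again a minimal parabolic $k$-subgroup, and by the conjugacy theory of parabolic $k$-subgroups (any two lie in a common apartment of the Tits building over $k$; cf.\ \cite{borel}) there is a maximal $k$-split torus $A$ of $G$ with $A \subseteq P \cap \theta(P)$. Minimality of $P$ enters right away: because $P$ and $\theta(P)$ are minimal parabolic $k$-subgroups containing $A$, each equals $Z_G(A)R_u(P)$ resp.\ $Z_G(A)R_u(\theta(P))$, so $Z_G(A)$ is a Levi factor of $M$ and the relative root groups appearing in $R_u(M)$ are among those appearing in $R_u(P) = U$; hence $V := R_u(M) = R_u(P) \cap R_u(\theta(P)) = U \cap \theta(U)$ is a connected unipotent $k$-subgroup of $U$ stable under $\theta$. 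Applying $\theta$ to $A \subseteq M$ gives $\theta(A) \subseteq M$, so $A$ and $\theta(A)$ are maximal $k$-split tori of the connected group $M$; since maximal $k$-split tori of a connected $k$-group are conjugate under $k$-rational points and $Z_G(A)$ is a Levi of $M$, one obtains $\theta(A) = uAu^{-1}$ with $u \in V_k$, and this $u$ is unique because $A$ acts on $V$ with only nonzero weights, so $N_V(A) = Z_V(A) = \{e\}$.

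Uniqueness then comes out almost immediately. If $A_1, A_2 \subseteq P$ are $\theta$-stable maximal $k$-split tori, then $A_i = \theta(A_i) \subseteq \theta(P)$, so both lie in $M$, and the previous paragraph (with $A_1$ in the role of $A$) gives $A_2 = uA_1u^{-1}$ for a unique $u \in V_k \subseteq U_k$. Applying $\theta$ and using $\theta(A_i) = A_i$ yields $\theta(u)A_1\theta(u)^{-1} = A_2 = uA_1u^{-1}$, so $u^{-1}\theta(u) \in N_V(A_1) = \{e\}$, i.e.\ $\theta(u) = u$. Thus $u$ is a $\theta$-fixed element of $U_k$; since $U \cap G^\theta$ is connected (here $\operatorname{char} k \neq 2$) it lies in $(G^\theta)^{\circ} \subseteq H$, so $u \in (H \cap U)_k$, which is the asserted uniqueness.

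For existence I still need one $\theta$-stable maximal $k$-split torus inside $P$. Starting from $\theta(A) = uAu^{-1}$ with $u \in V_k$ and applying $\theta$ once more gives $A = (\theta(u)u)A(\theta(u)u)^{-1}$, whence $\theta(u)u \in N_V(A) = \{e\}$, i.e.\ $\theta(u) = u^{-1}$. Now for $v \in V_k$ the torus $vAv^{-1}$ is $\theta$-stable precisely when $v^{-1}\theta(v)u \in N_V(A) = \{e\}$, i.e.\ when $v^{-1}\theta(v) = u^{-1}$. Since $\theta(u^{-1}) = (u^{-1})^{-1}$, such a $v$ exists as soon as the twisted map $v \mapsto v^{-1}\theta(v)$ carries $V_k$ onto $\{x \in V_k \st \theta(x) = x^{-1}\}$; this is the vanishing of the twisted first cohomology of the connected unipotent $k$-group $V$, which one proves by descending through a chain of $\theta$-stable connected normal subgroups of $V$ with vector-group quotients and observing that $\tfrac12(1+\theta)$ splits the $\theta$-action on each quotient (again using $\operatorname{char} k \neq 2$). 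For such a $v$, the torus $vAv^{-1} \subseteq VA \subseteq M \subseteq P$ is the required $\theta$-stable maximal $k$-split torus of $G$ lying in $P$.

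I expect the cohomological descent in the existence part to be the crux: the reductions to $M = P \cap \theta(P)$, the identification $V = U \cap \theta(U) \subseteq U$, and the whole uniqueness argument are essentially bookkeeping once minimality of $P$ is invoked, whereas upgrading the $\theta$-conjugacy relation $\theta(A) = uAu^{-1}$ to an honest $\theta$-fixed torus genuinely requires the vanishing statement for $V$, and with it the hypothesis that $k$ have characteristic different from $2$. A second point deserving care is the precise claim that maximal $k$-split tori of the connected group $M$, once one of them is placed inside the Levi $Z_G(A)$, are conjugate by an element of $V_k = R_u(M)_k$; this follows by combining the $G_k$-conjugacy of maximal $k$-split tori in connected $k$-groups with the Levi decomposition of $M$, but should be stated explicitly.
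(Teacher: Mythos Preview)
The paper does not supply its own proof of this theorem; it is simply quoted from \cite{helmwang}. Your argument is correct and is essentially the one given in Helminck--Wang (their Lemma~2.4): the passage to the $\theta$-stable group $M = P \cap \theta(P)$, the identification $R_u(M) = U \cap \theta(U)$, the conjugacy $\theta(A) = uAu^{-1}$ with $u \in V_k$ unique, and the cohomological surjectivity of $v \mapsto v^{-1}\theta(v)$ on $V_k$ to produce a $\theta$-stable conjugate are exactly the ingredients used there.

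One small imprecision: in the uniqueness step you assert that ``$U \cap G^\theta$ is connected,'' but $U$ itself is not $\theta$-stable in general, so this group is not obviously well-behaved. What you actually need (and have) is that $V^\theta$ is connected, since your element $u$ lies in $V = U \cap \theta(U)$, which \emph{is} $\theta$-stable connected unipotent; connectedness of $V^\theta$ in characteristic $\neq 2$ then gives $u \in (G^\theta)^\circ \subseteq H$ as desired. With that correction the argument is complete.
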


Let $P$ be a minimal parabolic $k$-subgroup and let $G$ and $H$ be defined as before.  With a few adjustments, one can construct a characterization of the $\pghk$ orbits in several equivalent ways by considering $P_k$ orbits on $Q_k$, $H_k$ orbits on $P_k \backslash G_k$, or $P_k \times H_k$ orbits on $G_k$.  

We start with the $H_k$-orbits on $P_k \backslash G_k$.  Let $\mathscr P_k$ denote the variety of all minimal parabolic $k$-subgroups of $G$, then we have that $P_k \backslash G_k$ is isomorphic to $\mathscr P_k$.  $G_k$ acts on $\mathscr P_k$ by conjugation, so we can identify the double cosets with the $H_k$-orbits on $\mathscr P_k$, denote these orbits by $\mathscr P_k \slash H_k$.

Let $\mathscr C_k$ denote the set of all pairs $(P_k^{\prime}, A_k^{\prime})$, where $P_k^{\prime}$ is a minimal parabolic $k$-subgroup and $A_k^{\prime}$ is a $\theta$-stable maximal $k$-split torus contained in $P_k$.  $G_k$ acts on $\mathscr C_k$ by conjugation in both coordinates, i.e. $g * (P_k^{\prime}, A_k^{\prime}) = (gP_k^{\prime}g^{-1}, gA_k^{\prime}g^{-1})$.  We can analyze the $H_k$ orbits on $\mathscr C_k$ (denoted $\mathscr C_k \slash H_k$) in two steps; first we consider the $H_k$-conjugacy classes of $\theta$-stable maximal $k$-split tori and choose a set of representatives for these conjugacy classes, and second for each representative of an $H_k$-conjugacy class we consider the set of minimal parabolic $k$-subgroups that contain the representative but are not conjugate via $H_k$.  This allows one to identify $\mathscr C_k \slash H_k$ with $\ds \cup_{i \in I} W_{G_k}(A_i) \slash W_{H_k}(A_i)$, where $\{ A_i \}_{i \in I}$ is a set of representatives for the $H_k$ conjugacy classes of $\theta$-stable maximal $k$-split tori.

$P_k$ acts on $G_k \slash H_k$ via the $\theta$-twisted action.  Let $A \subset P$ be a maximal $k$-split torus.  Then as in the case of a Borel subgroup acting on the symmetric space over an algebraically closed field, we have that the orbit of the unipotent radical of $P_k$ meets $N_{G_k}(A)$.  Let $\mathcal V_k = \{ x \in G_k \st \tau(x) \in N_{G_k}(A) \}$, then we can identify the $P_k$-orbits on $G_k \slash H_k$ with the $Z_{G_k}(A) \times H_k$-orbits on $\mathcal V_k$.  We denote these orbits by $V_k$.  

\begin{thm}
\label{pghk}
For a minimal parabolic $k$-subgroup $P$and $\{A_i\}_{i \in I}$ a set of representatives for the $H_k$-conjugacy classes of $\theta$-stable maximal $k$-split tori, we have 
$$ \pghk \cong \mathscr P_k \slash H_k \cong  \bigcup_{i \in I} W_{G_k}(A_i) \slash W_{H_k}(A_i) \cong P_k \backslash Q_k \cong V_k$$
\end{thm}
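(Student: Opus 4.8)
The plan is to prove the chain of bijections one link at a time, mirroring the proof of Springer's theorem (Theorem~\ref{springerthm}) but with a minimal parabolic $k$-subgroup in place of a Borel subgroup and with $k$-rational points throughout. Two of the links are formal. Since all minimal parabolic $k$-subgroups of $G$ are conjugate under $G_k$, the assignment $P_k g \mapsto g^{-1}Pg$ is a well-defined $G_k$-equivariant bijection $P_k\backslash G_k \to \mathscr P_k$ (well-definedness uses that $P$ is self-normalizing), and it carries the residual right $H_k$-action into conjugation; hence $\pghk \cong \mathscr P_k \slash H_k$. Composing instead with the isomorphism $G_k\slash H_k \cong Q_k$ of the Background section, which intertwines left translation by $P_k$ with the $\theta$-twisted action, gives $\pghk \cong P_k\backslash Q_k$.

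The essential link is $\mathscr P_k\slash H_k \cong \bigcup_{i\in I} W_{G_k}(A_i)\slash W_{H_k}(A_i)$, and I would factor it through $\mathscr C_k\slash H_k$. First, the forgetful map $(P',A')\mapsto P'$ from $\mathscr C_k$ to $\mathscr P_k$ is surjective with fibres a single $(H\cap R_u(P'))_k$-orbit, by the existence and uniqueness in Theorem~\ref{Pthetastable}; passing to $H_k$-orbits and using $(H\cap R_u(P'))_k\subset H_k$, one checks this descends to a bijection $\mathscr C_k\slash H_k\cong\mathscr P_k\slash H_k$, its inverse sending $[P']$ to $[(P',A')]$ for any $\theta$-stable maximal $k$-split torus $A'\subset P'$, with Theorem~\ref{Pthetastable} being exactly what makes this independent of the choice of $A'$ modulo $H_k$. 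Second, I would stratify $\mathscr C_k\slash H_k$ by the $H_k$-conjugacy class of the torus coordinate: choosing representatives $\{A_i\}_{i\in I}$ (every $H_k$-class occurs, since a maximal $k$-split torus always lies in some minimal parabolic $k$-subgroup), the stratum over the class of $A_i$ is the set of minimal parabolic $k$-subgroups containing $A_i$, taken modulo $N_{H_k}(A_i)$-conjugacy. The minimal parabolic $k$-subgroups containing the fixed torus $A_i$ are the Weyl chambers of the relative root system $\Phi(A_i)$, permuted simply transitively by the relative Weyl group $W_{G_k}(A_i)=N_{G_k}(A_i)\slash Z_{G_k}(A_i)$ (Borel--Tits theory; the Proposition above, that one $G_k$-element simultaneously conjugates a maximal $k$-split torus and a containing minimal parabolic $k$-subgroup, is what makes this action well-posed and transitive). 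Quotienting this $W_{G_k}(A_i)$-torsor by $W_{H_k}(A_i)=N_{H_k}(A_i)\slash Z_{H_k}(A_i)$ gives $W_{G_k}(A_i)\slash W_{H_k}(A_i)$, and summing over $i$ completes the link.

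For $\pghk\cong V_k$ I would run the $\mathcal V$-argument of the Borel case at the level of $k$-points. Write $P=Z_G(A)U$ with $A\subset P$ a maximal $k$-split torus and $U=R_u(P)$. One uses that every $U_k$-orbit for the twisted action on $G_k\slash H_k\cong Q_k$ meets $N_{G_k}(A)$, so that $\mathcal V_k=\{x\in G_k\st\tau(x)\in N_{G_k}(A)\}$ meets every $P_k$-orbit; since $\mathcal V_k$ is stable under left multiplication by $N_{G_k}(A)$ and right multiplication by $H_k$, and the $U_k$-part of $P_k$ has been consumed in reaching $N_{G_k}(A)$, one shows each $P_k\times H_k$-orbit on $G_k$ meets $\mathcal V_k$ in exactly one $(Z_{G_k}(A),H_k)$-double coset. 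The set of these double cosets is $V_k$, and the map induced by inclusion is the desired bijection. The step I expect to be the main obstacle is this last one, and specifically the $k$-rationality of the two facts it rests on: that the unipotent radical already reaches $N_{G_k}(A)$ under the twisted action, and that no $P_k\times H_k$-orbit meets $\mathcal V_k$ in more than one $(Z_{G_k}(A),H_k)$-coset. Over an algebraically closed field these are in \cite{helmwang}; descending them requires that the relevant orbit maps be separable and defined over $k$, so that rational orbits are neither lost nor split, and here the existence of a $\theta$-stable $k$-split Levi from Theorem~\ref{Pthetastable} does the real work. Establishing the uniqueness of the double coset, rather than its mere nonemptiness, is the delicate point.
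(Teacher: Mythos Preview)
Your proposal is correct and follows essentially the same route the paper sketches. Note that the paper does not give a formal proof of this theorem: it treats the result as background (citing \cite{gensymmspaces} and \cite{helmwang}) and merely outlines, in the paragraphs immediately preceding the statement, exactly the steps you carry out---the identification $\pghk\cong\mathscr P_k\slash H_k$ via conjugation, the two-step analysis of $\mathscr C_k\slash H_k$ leading to $\bigcup_i W_{G_k}(A_i)\slash W_{H_k}(A_i)$, and the $\mathcal V_k$ description of the $P_k$-orbits on $Q_k$; your write-up simply makes these steps explicit and flags the rationality issues that the paper leaves implicit.
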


We observe that this characterization is in direct analogy with the case of a Borel subgroup acting on the symmetric space and simpler than the case of a general parabolic subgroup acting on the symmetric space.  This is because a minimal parabolic $k$-subgroup contains a $\theta$-stable maximal $k$-split torus in light of Theorem \ref{Pthetastable}.

\begin{eg} \label{sl2 compute}
Let $G=\SL(2,\C)$ with real form $G_{\R} = \SL(2,\R)$.  Let $T$ denote the group of diagonal matrices and $P$ the set of upper triangular matrices.  
\begin{enumerate}[(a)]
\item Let $\sigma = \Int(\begin{smallmatrix} 0&1 \\ 1 & 0 \end{smallmatrix}) $.  Then $H_{\R} = \{ \left( \begin{smallmatrix} a &b \\ b & a \end{smallmatrix} \right) \st a^2 - b^2 =1 \ a,b \in \R \}^{\textdegree}$ is connected and abelian and is diagonalizable by an orthogonal matrix.  We compute the Weyl group elements 
$$ W_{G_{\R}}(T) = W_{G_{\R}}(H)= \left\{ \left( \begin{matrix} 1 & 0 \\ 0 & 1 \end{matrix} \right) , \ \left( \begin{matrix} 0 & 1 \\ -1 & 0 \end{matrix} \right) \right\} $$
The nonindentity element has a representative in $H$, given by $\left( \begin{smallmatrix} 0 & i \\ i & 0 \end{smallmatrix} \right)$, but this representative is not in $H_{\R}$.  Thus we conclude $|W_{G_{\R}}(T) \slash W_{H_{\R}}(T)| = |W_{G_{\R}}(H) \slash W_{H_{\R}}(H)| =2$, and there are 4 orbits in $P_{\R} \backslash G_{\R} \slash H_{\R}$.

\item Let $\theta = \Int(\begin{smallmatrix} 0&1 \\ -1 & 0 \end{smallmatrix}) $.  Then $H_{\mathbb{R}} = \{ \left( \begin{smallmatrix} a &b \\ -b & a \end{smallmatrix} \right) \st a^2 + b^2 =1, \ a,b \in \R \}$.  While $H_{\R}$ is connected and abelian, its eigenvalues are complex and thus it not an $\R$-split torus.  Therefore there is only $H_{\R}$-conjugacy class of $\theta$-stable maximal $k$-split tori and only one orbit in $P_{\R} \backslash G_{\R} \slash H_{\R}$.
\end{enumerate}
\end{eg}

As with the Bruhat decomposition, we can reformulate Theorem \ref{pghk} to obtain $$G_k = \bigcup_{i \in I}\bigcup_{w \in W_{G_k}(A_i) } P_k \dot{w} H_k, \ \text{where $\dot{w}$ is a representative of }w\in W_{G_k}(A_i)$$
Applying this observation to Example \ref{sl2 compute}(b) and the fact that $\id \in N_{G_{\R}}(T)$, we have that $G_{\R} = H_{\R}P_{\R}$, which is the well known Iwasawa decomposition of a real reductive group with Cartan Involution $\theta$.

While the characterization is similar, many of the properties from the case of an algebraically closed field do not hold.  For instance, the orbits in Theorem \ref{pghk}  are always finite over an algebraically closed field, but over general fields this condition is frequently not satisfied.

In several cases, however, the number of orbits is finite.  For algebraically closed fields this was proved by Springer \cite{springerpaper}, for $k=\R$ it was shown by Matsuki \cite{matsuki}, Rossman \cite{rossman}, and Wolf \cite{wolf}, and for general local fields the result is due to Helminck and Wang \cite{helmwang}.

Similar to the algebraically closed case, we can place an order on the $I$-poset. Let $I$ be an index set of $H_k$-conjugacy classes of the $\theta$-stable maximal $k$-split tori in $G$, and denote these conjugacy classes by $\{ [A_i] \}_{i \in I}$.  We define $[A_i] \leq [A_j]$ if $A_i^- \subset A_j^-$ for some representatives $A_i$ of $[A_i]$ and $A_j$ of $[A_j]$.  Given a set of representatives $\{ A_i \}_{i \in I}$ for $\{ [A_i] \}_{i \in I}$, we write $A_i \leq A_j$ if $[A_i] \leq [A_j]$.  As with maximal tori in the algebraically closed case, the order extends to all maximal $k$-split tori.  Thus each representative of an $H_k$-conjugacy class of $\theta$-stable maximal $k$-split tori with the same dimension of its split component correspond to nodes in the same level in the diagram associated to the order on the $I$-poset. 
\begin{eg} Consider $\SL(2,\Q_p)$, $p \equiv 1 \mod 4$.  Let $\theta = \intone$.  Then by \cite{beun} we know that there are four $H_k$-conjugacy classes of $\theta$-stable maximal $(\theta,\Q_p)$-split tori and one $H_k$-conjugacy classes of $\theta$-stable, $\theta$-fixed maximal $\Q_p$-split tori.  Thus the diagram for the $I$-poset is:
\begin{figure}[h]
\begin{center}
\resizebox{1in}{.5in}{
\begin{tikzpicture}
	\node[draw,circle, inner sep=4pt,fill] at (0,0) {};
	\node[draw,circle, inner sep=4pt,fill] at (2,0) {};
	\node[draw,circle, inner sep=4pt,fill] at (4,0) {};
	\node[draw,circle, inner sep=4pt,fill] at (6,0) {};
	
	\node[draw,circle, inner sep=4pt,fill] at (2,0) {};
	\draw[ultra thick] (2,0) -- (3,-2);
	\draw[ultra thick] (4,0) -- (3,-2);
	\draw[ultra thick] (6,0) -- (3,-2);

	\node[draw,circle, inner sep=4pt,fill] at (3,-2) {};
	\draw[ultra thick] (0,0) -- (3,-2);
\end{tikzpicture}
}
\caption{$I$-poset diagram for $\SL(2,\Q_p)$, $p \equiv 1 \mod 4$}

\end{center}
\end{figure}
\end{eg}

This diagram can be expanded to the order on the orbits given by closure in the Zariski topology.  Each node in the $I$-poset diagram is expanded to the number of orbits corresponding to the torus $A_i$ which is given by the order of $ W_{G_k}(A_i) \slash W_{H_k}(A_i) $.

\begin{eg}
Consider the setting of the previous example.  Each $(\theta,\Q_p)$-split torus corresponds to one orbit, while the $\theta$-fixed torus corresponds to two.  Thus the orbit diagram is given in Figure \ref{sl2qp orbit}.

\begin{figure}[h]

\begin{center}
\resizebox{1in}{.5in}{
\begin{tikzpicture}
	
	\node[draw,circle, inner sep=4pt,fill] at (0,0) {};
	\node[draw,circle, inner sep=4pt,fill] at (2,0) {};
	\node[draw,circle, inner sep=4pt,fill] at (4,0) {};
	\node[draw,circle, inner sep=4pt,fill] at (6,0) {};
	
	\node[draw,circle, inner sep=4pt,fill] at (2,0) {};
	\draw[ultra thick] (2,0) -- (2,-2);
	\draw[ultra thick] (4,0) -- (2,-2);
	\draw[ultra thick] (6,0) -- (2,-2);
	\draw[ultra thick] (2,0) -- (4,-2);
	\draw[ultra thick] (4,0) -- (4,-2);
	\draw[ultra thick] (6,0) -- (4,-2);

	\node[draw,circle, inner sep=4pt,fill] at (2,-2) {};
	\node[draw,circle, inner sep=4pt,fill] at (4,-2) {};

	\draw[ultra thick] (0,0) -- (2,-2);
	\draw[ultra thick] (0,0) -- (4,-2);

\end{tikzpicture}
}
\caption{Orbit diagram for $\SL(2,\Q_p)$, $p \equiv 1 \mod 4$}
\label{sl2qp orbit}
\end{center}
\end{figure}
\end{eg}

For a maximal torus $T$, the roots $\Phi(T)$ can be classified in analogy with the case $k=\R$.  $\theta$ acts on the Weyl group, and therefore on the reflections corresponding to the roots.  Thus $\theta$ acts on $\Phi$.  If $\theta(\alpha) = -\alpha$ then $\alpha$ is called a real root, if $\theta(\alpha) = \alpha$ then $\alpha$ is called imaginary, and $\alpha$ is called complex if $\theta(\alpha) \neq \pm \alpha$. 
\begin{defn} Two roots $\alpha, \beta \in \Phi(T)$ are called strongly orthogonal if $(\alpha, \beta) = 0 $ and $\alpha \pm \beta \notin \Phi(T)$.
\end{defn}
\section{Standard $k$-split Tori}
We review some of the results of \cite{tori2}, for our purposes they will be used to determine the domain of the restriction of the generalized complexification map to the $I$-poset.
Let $\mathscr A$ denote the set of $\theta$-stable maximal $k$-split tori.  
\begin{defn}
Let $A_1,A_2 \in \mathscr A$.  Then $(A_1,A_2)$ is called a standard pair if $A_1^- \subset A_2^-$ and $A_2^+ \subset A_1^+$.  $A_1$ is said to be standard with respect to $A_2$.
\end{defn}

The $\theta$-stable maximal $k$-split tori can be arranged into chains of standard pairs as in the algebraically closed case (see \cite{tori1}).
 \begin{thm} Let $A\in \mathscr{A}$ such that $A^{-}$ is maximal $(\theta,k)$-split.  Then there exists a $S \in \mathscr{A}$ standard with respect to $A$ such that $S^+$ is a maximal $k$-split torus of $H$.
\end{thm}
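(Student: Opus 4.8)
The plan is to construct $S$ inside the reductive $k$-subgroup $M := Z_G(A^+)$, where $A^+$ is the $\theta$-fixed part of $A$. Since $A^+$ is a $k$-split torus contained in $G^\theta$ (hence in $H$), $M$ is a connected reductive $\theta$-stable $k$-subgroup in which $A^+$ is central and $\theta|_M$ is a $k$-involution; moreover $A \subseteq M$, so $A$ is maximal $k$-split in $M$ and $A^-$, being maximal $(\theta,k)$-split in $G$, is also maximal $(\theta,k)$-split in $M$. The advantage of working in $M$ is that the inclusion $A^+ \subseteq S^+$ required of a standard pair becomes automatic: $A^+$ is central in $M$, hence lies in every maximal $k$-split torus of $M^\theta$.

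First I would produce a $\theta$-stable maximal $k$-split torus $S_0$ of $M$ whose $\theta$-fixed part is maximal $k$-split in $M^\theta$. Choose a maximal $k$-split torus $R$ of $M^\theta$, set $M' := Z_M(R)$ (again $\theta$-stable reductive, with $R$ central), and let $S_0$ be a $\theta$-stable maximal $k$-split torus of $M'$, which exists by Theorem \ref{Pthetastable} applied to a minimal parabolic $k$-subgroup of $M'$. Since $R$ is a central $k$-split torus it lies in $S_0$, and from $Z_M(S_0) \subseteq Z_M(R) = M'$ one gets that $S_0$ is maximal $k$-split in $Z_M(S_0)$, hence in $M$; its $\theta$-fixed part is $k$-split and $\theta$-fixed and contains the maximal torus $R$ of $M^\theta$, so it equals $R$, and in particular $A^+ \subseteq S_0$. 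Because $A^+ \subseteq S_0$ we also have $Z_G(S_0) \subseteq Z_G(A^+) = M$, so $S_0$ is maximal $k$-split in $G$. Next I would move the split part into $A^-$: the torus $S_0^-$ is $(\theta,k)$-split in $M$, hence lies in a maximal $(\theta,k)$-split torus of $M$, and by conjugacy of maximal $(\theta,k)$-split tori (the $M$-analogue of the results of \cite{helmwang}) there is $h \in (M\cap H)_k \subseteq (M^\theta)_k$ with $hS_0^-h^{-1} \subseteq A^-$. Put $S := hS_0h^{-1}$. Then $S$ is $\theta$-stable (as $h \in G^\theta$); $A^+ \subseteq S^+$, since $h$ centralizes $A^+$ and $A^+ \subseteq S_0^+$; $S$ is maximal $k$-split in $M$, hence in $G$; $S^+ = hS_0^+h^{-1}$ is maximal $k$-split in $M^\theta$; and $S^- \subseteq A^-$. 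Thus $S \in \mathscr A$ and $(S,A)$ is a standard pair.

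It remains to upgrade ``$S^+$ maximal $k$-split in $M^\theta$'' to ``maximal $k$-split in $H$''. Since $H$ is $k$-open in $G^\theta$, the group $Z_H(A^+) = H \cap M$ is $k$-open in $M^\theta$, so $S^+$ is maximal $k$-split in $Z_H(A^+)$. Now $A^+$ is itself a $k$-split torus of $H$, so it lies in some maximal $k$-split torus $S'$ of $H$; then $S' \subseteq Z_H(A^+)$, whence $\dim S' \le \dim S^+$, while $S^+ \subseteq H$ is $k$-split, so $\dim S^+ \le \dim S' = k\krank(H)$. Hence $\dim S^+ = k\krank(H)$ and $S^+$ is maximal $k$-split in $H$, as required. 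The step I expect to be the main obstacle is reconciling the two inclusions defining a standard pair: a maximal $k$-split torus of $H$ and the maximal $(\theta,k)$-split torus $A^-$ need not be positioned compatibly in $G$, so a conjugation arranging $S^- \subseteq A^-$ will in general destroy $A^+ \subseteq S^+$. Passing to $M = Z_G(A^+)$ is precisely what decouples these: there $A^+$ is central, so it lies in $S^+$ automatically, and the only conjugations needed afterwards come from $M^\theta$, which fixes $A^+$ and preserves $\theta$-stability. The remaining care is that the various ``maximal $k$-split'' assertions descend correctly from $M$ to $G$, which follows from the standard fact that a torus maximal $k$-split in its own centralizer is maximal $k$-split in the ambient group.
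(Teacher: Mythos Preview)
The paper does not actually prove this theorem; it is stated without proof in Section~4 as one of the results reviewed from \cite{tori2}. So there is no ``paper's own proof'' to compare against.

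Your argument is a self-contained and essentially correct proof. The key idea---passing to $M = Z_G(A^+)$ so that $A^+$ becomes central and the two inclusions defining a standard pair decouple---is precisely the natural approach, and it is the one underlying Helminck's treatment of standard tori. Two small points worth tightening: (1) In the conjugacy step you invoke the $M$-analogue of Helminck--Wang to move $S_0^-$ into $A^-$ by an element of $(M\cap H)_k$. What that result actually provides is conjugation by an element of $((M^\theta)^\circ)_k$; since $H$ is $k$-open in $G^\theta$ one has $(M^\theta)^\circ \subseteq (G^\theta)^\circ \subseteq H$, so the conjugating element does lie in $(H\cap M)_k$ as you claim. (2) In the final paragraph you use that $S^+ \subseteq H$; strictly $S^+$ is a connected $k$-split torus in $G^\theta$, and again $k$-openness of $H$ gives $(G^\theta)^\circ \subseteq H$, hence $S^+ \subseteq H$. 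With these clarifications the argument goes through.
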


\subsection[$\theta,k)$-singularity]{(\boldmath$\theta,k)$-singularity}
The one dimensional subgroups containing both a $\theta$-fixed $k$-split torus and a $(\theta,k)$-split torus depend heavily on the $k$-structure of the group but can still be parameterized by tori.  An involution defined over $k$ of a connected reductive group $M$ is called split if there exists a maximal $k$-torus which is $(\theta,k)$-split.

\begin{defn} Let $A \in \mathscr A$ and for each $\alpha \in \Phi(A)$ let $\ker(\alpha) = \{ a \in A \st s_{\alpha}(a) =a \}$, where $s_{\alpha}$ is the reflection corresponding to $\alpha$.  Set $G_{\alpha} = Z_G(\ker(\alpha))$.  Then $\alpha$ is called $(\theta,k)$-singular if 
\begin{enumerate}[(a)]
\item $\theta |_{[G_{\alpha},G_{\alpha}]}$ is split
\item $k\krank([G_{\alpha},G_{\alpha}]) =k\krank([G_{\alpha},G_{\alpha}]\cap H)$
\end{enumerate}
\end{defn}

\begin{thm}
Let $A$ be a $\theta$-stable maximal $k$-split torus of $G$ and $\Psi = \{ \alpha_1, \dots, \alpha_r \} \subset \Phi(A)$ a set of strongly orthogonal roots.  Let $G_{\Psi} = G_{{\alpha_1}} \cdots G_{\alpha_r}$.  Then $$[G_{\Psi},G_{\Psi}] = \ds \prod_{i=1}^r [G_{{\alpha_i}},G_{{\alpha_i}}]$$
Moreover, if $\alpha_1, \dots, \alpha_r $ are $(\theta,k)$-singular, then $\theta |_{[G_{\Psi},G_{\Psi}]}$ is $k$-split and $k\krank([G_{\Psi},G_{\Psi}]) = k\krank([G_{\Psi},G_{\Psi}]\cap H)$.
\end{thm}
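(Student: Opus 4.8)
The plan is to reduce the statement about $[G_\Psi, G_\Psi]$ to the rank-one pieces $[G_{\alpha_i}, G_{\alpha_i}]$ and then assemble the conclusion from the hypothesis that each $\alpha_i$ is $(\theta,k)$-singular. First I would establish the product decomposition $[G_\Psi, G_\Psi] = \prod_{i=1}^r [G_{\alpha_i}, G_{\alpha_i}]$. The key geometric input is strong orthogonality: since $(\alpha_i, \alpha_j) = 0$ and $\alpha_i \pm \alpha_j \notin \Phi(A)$ for $i \neq j$, the subgroups $G_{\alpha_i} = Z_G(\ker \alpha_i)$ pairwise commute (their root subgroups generate commuting $\SL_2$-type or torus pieces), so $G_\Psi = G_{\alpha_1} \cdots G_{\alpha_r}$ is a quotient of the direct product, and the derived group of an almost-direct product of commuting reductive subgroups is the product of the derived groups. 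I would make this precise by examining the root subgroups $U_\beta$ with $\beta \in \Phi(A)$ restricting to a multiple of some $\alpha_i$, using strong orthogonality to see that no root involved in $[G_{\alpha_i}, G_{\alpha_i}]$ equals or interacts with a root involved in $[G_{\alpha_j}, G_{\alpha_j}]$.

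Next I would address $\theta$-stability. Each $G_{\alpha_i}$ is $\theta$-stable because $\ker \alpha_i \subset A$ and $A$ is $\theta$-stable, and $\theta$ permutes the reflections, hence $\theta(\ker \alpha_i) = \ker(\theta \alpha_i)$; but actually for the purposes here $\alpha_i$ being a root of the $(\theta,k)$-split component behaves well — I would want to confirm that $\theta$ preserves the set $\{[G_{\alpha_i}, G_{\alpha_i}]\}$, which again follows from strong orthogonality preventing $\theta$ from mixing the factors in a way that would break the product decomposition as a decomposition of $\theta$-stable subgroups. Then $\theta$ restricts to an involution of each $[G_{\alpha_i}, G_{\alpha_i}]$, and by hypothesis (a) each such restriction is split, meaning each $[G_{\alpha_i}, G_{\alpha_i}]$ contains a maximal $k$-torus that is $(\theta,k)$-split. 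Taking the product of these $(\theta,k)$-split maximal $k$-tori over $i$ gives a $(\theta,k)$-split torus in $[G_\Psi, G_\Psi]$; I would then check it is maximal as a $k$-torus in the product using the product structure, which gives that $\theta|_{[G_\Psi, G_\Psi]}$ is $k$-split.

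Finally, for the rank equality, I would use the product decomposition compatibly with both the $k$-split rank and intersection with $H$. The $k$-split rank of a product of commuting $k$-subgroups with finite intersection is the sum of the $k$-split ranks, so $k\krank([G_\Psi, G_\Psi]) = \sum_i k\krank([G_{\alpha_i}, G_{\alpha_i}])$. For the right side I need $[G_\Psi, G_\Psi] \cap H$ to decompose compatibly: I would argue that $[G_\Psi, G_\Psi] \cap H = \prod_i ([G_{\alpha_i}, G_{\alpha_i}] \cap H)$ up to finite index, using that $H$ is the fixed-point group of $\theta$ and $\theta$ respects the factorization, so a maximal $k$-split torus of $[G_\Psi, G_\Psi] \cap H$ is the product of maximal $k$-split tori of the $[G_{\alpha_i}, G_{\alpha_i}] \cap H$. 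Then $k\krank([G_\Psi, G_\Psi] \cap H) = \sum_i k\krank([G_{\alpha_i}, G_{\alpha_i}] \cap H)$, and applying hypothesis (b) term by term yields the desired equality.

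The main obstacle I expect is controlling the finite-index and isogeny issues cleanly: the product map $G_{\alpha_1} \times \cdots \times G_{\alpha_r} \to G_\Psi$ is only an isogeny, not an isomorphism, so I must verify that passing to derived groups, to $k$-split ranks, and to intersections with $H$ all behave well under isogenies and under the fixed-point functor $(-)^\theta$. In particular, the claim $[G_\Psi, G_\Psi] \cap H = \prod_i([G_{\alpha_i}, G_{\alpha_i}] \cap H)$ needs care because $H$ need not be connected and the finite kernel of the isogeny could a priori contribute fixed points outside the naive product; I would handle this by working at the level of Lie algebras or maximal tori where the rank statements are insensitive to such finite discrepancies, and by invoking the separable isogeny property of the product map $\mu$ mentioned in the Background section to transfer the $(\theta,k)$-split torus statements without loss.
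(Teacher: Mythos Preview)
The paper does not supply its own proof of this theorem; it is stated in Section~4 as part of the review of results from \cite{tori2}, with no argument given. So there is no in-paper proof to compare your proposal against.

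Your outline is the standard one and is essentially correct. The product decomposition follows from strong orthogonality exactly as you indicate: each $G_{\alpha_i}$ contains $A$, the derived subgroup $[G_{\alpha_i},G_{\alpha_i}]$ is generated by root subgroups for roots of $A$ restricting to nonzero multiples of $\alpha_i$, and the condition $\alpha_i\pm\alpha_j\notin\Phi(A)$ forces these to commute across distinct $i,j$, so $G_\Psi$ is an almost-direct product and its derived group factors as claimed. For the ``moreover'' clause, your plan of assembling the $(\theta,k)$-split maximal tori and the rank equalities factor by factor is the right one; the isogeny concerns you raise are harmless because $k$-split rank and the property of admitting a $(\theta,k)$-split maximal torus are invariant under central isogenies, and the intersection with $H$ need only be controlled on identity components for a rank statement.

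One point to tighten: $\theta$-stability of $G_{\alpha_i}=Z_G(\ker\alpha_i)$ is not automatic from $A$ being $\theta$-stable alone---it requires $\theta(\ker\alpha_i)=\ker\alpha_i$, i.e.\ $\theta(\alpha_i)=\pm\alpha_i$. You hedge on this, but it is in fact packaged into the hypothesis that $\alpha_i$ is $(\theta,k)$-singular, since that definition presupposes that $\theta$ restricts to $[G_{\alpha_i},G_{\alpha_i}]$. For the first assertion of the theorem (the bare product decomposition), $\theta$ plays no role and you should simply drop that discussion there.
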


\section{Generalized Complexification}
\label{GC}

Consider the orbits $\bgh$ over an algebraically closed field.  These are the orbits of a minimal parabolic subgroup acting on a symmetric variety, which can be related to the orbits $\pghk$ of a minimal parabolic $k$-subgroup acting on a symmetric $k$-variety.  A description of how the algebraically closed orbits break up over a subfield is a fundamental question related to the representation theory of the symmetric $k$-varieties.  In this section we approach this problem from the reverse direction, namely by embedding the orbits over a subfield $k$ into the orbits over its algebraic closure $\bar{k}$.  When $k = \R$ this process is the complexification of the real orbits, thus we call the map yielded by the embedding generalized complexification.

Let $P\subset G$ be a minimal parabolic $k$-subgroup and $A \subset P$ a maximal $k$-split torus.  We define the generalized complexification map:
\begin{align*}
\varphi: P_k \backslash G_k / H_k &\rightarrow P\backslash G / H \\
 P_kgH_k &\mapsto P gH
\end{align*}
Recall from Theorem \ref{pghk} that there are several equivalent characterization of the double cosets $\pghk$.  The generalized complexification map $\varphi$ induces maps across all of these equivalent formulations.  Given $v \in V_k$, let $x(v)$ be representative in $N_{G_k}(A)$ such that $v = Z_{G_k}(A) x(v) H_k$.  Then we have an induced map:

\begin{align*}
\varphi_V: V_k &\rightarrow V \\
Z_{G_k}(A) x(v) H_k &\mapsto Z_G(A) x(v) H
\end{align*}

Let $G$ be a $k$-split group.  Then minimal parabolic $k$-subgroups are Borel $k$-subgroups.  In this case we have a simpler generalized complexification map:
\begin{align*}
\varphi: B_k \backslash G_k / H_k &\rightarrow B\backslash G / H \\
 B_kgH_k &\mapsto B gH
\end{align*}
The corresponding induced maps are also simpler in this case since the maximal $k$-split tori are in fact maximal tori.  The generalized complexification of orbits corresponding to the $B_k \times H_k$ action on $G_k$ becomes:
\begin{align*}
\varphi: V_k &\rightarrow V \\
A_k x(v) H_k &\mapsto A x(v) H
\end{align*}

The greatestest simplification, however, occurs in the induced map among the union of quotients of Weyl groups.  Let $\{A_i\}_{i \in I}$ be a set of representatives of the $H_k$-conjugacy classes of $\theta$-stable maximal $k$-split tori.  Then $\{A_i\}_{i \in I}$ corresponds to $\{B_i\}_{i \in I^{\prime}}$, a set of representatives for the $H$-conjugacy classes of $\theta$-stable maximal tori.  This is done in the following manner.  Among the $\{A_i\}$ that correspond to the same $H$-conjugacy class of $\theta$-stable maximal tori, a representative is chosen.  This set is then extended with arbitary representatives of the $H$-conjugacy classes of $\theta$-stable maximal tori not obtained from the $\{A_i\}$.  Therefore the generalized complexification map acts as the identity:
\begin{align*}
\varphi: \ds \bigcup_{i \in I} W_{G_k}(A_i) \slash W_{H_k}(A_i) &\rightarrow \bigcup_{i \in I^{\prime}} W_G(A_i) \slash W_H(A_i) \\
gW_{H_k}(A_i) &\mapsto gW_H(A_i)
\end{align*}

\begin{rem} For groups that are not $k$-split, $\varphi$ still induces a map on the union of Weyl group quotients.  This map is more complicated and involves the introduction of another quotient.
\end{rem}

\subsection{Some Examples}
\label{examples}
In general, the surjectivity of the generalized complexification map depends on both the choice of involution $\theta$ and the field of definition $k$.  The first example of this section illustrates the dependence of the surjectivity of the generalized complexification map on the choice of involution.
\begin{eg} Let $G= \SL(2,\C)$ with real form $G_{\R} = \SL(2,\R)$.  Note that in this case $G$ is $\R$-split.  Let $T$ denote the set of diagonal matrices and $P=B$ the set of upper triangular matrices.
\begin{enumerate}[(a)]

\item Let $\sigma = \Int(\begin{smallmatrix} 0&1 \\ 1 & 0 \end{smallmatrix}) $.  Then from Example \ref{sl2 compute} we have that $\left| B_{\R} \backslash G_{\R} \slash H_{\R} \right|= 4$.  Two orbits correspond to each representative of the $H_{\R}$-conjugacy class of $\sigma$-stable maximal $\R$-split tori, denote the orbits corresponding to $T$ by $\mathscr O_1$ and $\mathscr O_2$.  There is only one orbit corresponding to $T$ over $\C$, therefore the complexification maps $\mathscr O_1$ and $\mathscr O_2$ to the same orbit over $\C$.  Consider $\varphi: B_{\R} \backslash G_{\R} \slash H_{\R} \rightarrow \bgh$.  We can represent the action of $\varphi$ diagrammatically, as in Figure \ref{sl2r inn1 gc}.

\begin{figure}[h]

\begin{center}
$\begin{matrix}
\begin{tikzpicture}
	\node[draw,circle, inner sep=2pt,fill] at (0,0) {};
	\node[draw,circle, inner sep=2pt,fill] at (.5,0) {};
	\node[draw,circle, inner sep=2pt,fill] at (.5,-.5) {};
	\node[draw,circle, inner sep=2pt,fill] at (0,-.5) {};

	\draw[ thick] (0,0) -- (0,-.5);
	\draw[ thick] (0,0) -- (.5,-.5);
	\draw[ thick] (.5,0) -- (0,-.5);
	\draw[ thick] (.5,0) -- (.5,-.5);

\end{tikzpicture} 
& \overset{\varphi}{\xrightarrow{\hspace{1 in}} }

&\begin{tikzpicture}
	\node[draw,circle, inner sep=2pt,fill] at (.25,0) {};
	\node[draw,circle, inner sep=2pt,fill] at (.5,-.5) {};
	\node[draw,circle, inner sep=2pt,fill] at (0,-.5) {};

	\draw[ thick] (.25,0) -- (0,-.5);
	\draw[ thick] (.25,0) -- (.5,-.5);

\end{tikzpicture}

\\
\SL(2,\mathbb R) & &\SL(2,\mathbb C) &  \\
\theta = \intone&&
\end{matrix}
$ 
\caption{Generalized complexification of $\SL(2,\R)$, $\theta=\Int\usebox{\smlmat}$}
\label{sl2r inn1 gc}
\end{center}

\end{figure}

\item Let $\theta = \Int(\begin{smallmatrix} 0&1 \\ -1 & 0 \end{smallmatrix}) $.  Then from Example \ref{sl2 compute} we have that $\left| B_{\R} \backslash G_{\R} \slash H_{\R} \right|= 1$. The complexification map has a cokernel, indicated by the empty nodes in Figure \ref{sl2r inn-1 gc}

\begin{figure}[h]

\begin{center}
$\begin{matrix}
\begin{tikzpicture}
	\node[draw,circle, inner sep=2pt,fill] at (0,0) {};
	\node[draw,circle, inner sep=2pt,fill,color=white] at (0,-.5) {};

\end{tikzpicture}
& \overset{\varphi}{\xrightarrow{\hspace{1 in}} }

&\begin{tikzpicture}
	\node[draw,circle, inner sep=2pt,fill] at (.25,0) {};
	\node[draw,circle, inner sep=2pt] at (.5,-.5) {};
	\node[draw,circle, inner sep=2pt] at (0,-.5) {};

	\draw[ thick, dashed] (.25,0) -- (0,-.5);
	\draw[ thick, dashed] (.25,0) -- (.5,-.5);

\end{tikzpicture}

\\
\SL(2,\mathbb R) & &\SL(2,\mathbb C) &  \\
\theta = \Int\left(\begin{smallmatrix} 0 & 1 \\ -1 & 0 \end{smallmatrix}\right)&&
\end{matrix}
$ 
\caption{Generalized complexification of $\SL(2,\R)$, $\theta=\Int\usebox{\smlmatneg}$}
\label{sl2r inn-1 gc}
\end{center}

\end{figure}

\end{enumerate}
\end{eg}

Recall that there infinite number of orbits $B_{\Q} \backslash G_{\Q} \slash H_{\Q}$ for $G_{\Q} = \SL(2,\Q)$.  The complexification of these orbits is quite similar to the complexification of the real orbits, as shown in Figure \ref{sl2q inn1 gc} and Figure \ref{sl2q inn-1 gc}.

\begin{figure}

\begin{center}
$\begin{matrix}
\begin{tikzpicture}
	\node[draw,circle, inner sep=2pt,fill] at (0,0) {};
	\node[draw,circle, inner sep=2pt,fill] at (.5,0) {};
	\node[draw,circle, inner sep=2pt,fill] at (1,0) {};
	\node[draw,circle, inner sep=2pt,fill] at (1.5,0) {};
	\node[draw,circle, inner sep=2pt,fill] at (0.5,-.5) {};
	\node[draw,circle, inner sep=2pt,fill] at (1,-.5) {};

	\node[draw=none] (ellipsis1) at (2,-.25) {$\cdots$};
	\node[draw=none] (ellipsis2) at (-.5,-.25) {$\cdots$};

	\draw[thick] (0,0) -- (0.5,-.5);
	\draw[thick] (.5,0) -- (0.5,-.5);
	\draw[thick] (1,0) -- (0.5,-.5);
	\draw[thick] (1.5,0) -- (0.5,-.5);
	\draw[thick] (0,0) -- (1,-.5);
	\draw[thick] (.5,0) -- (1,-.5);
	\draw[thick] (1,0) -- (1,-.5);
	\draw[thick] (1.5,0) -- (1,-.5);

\end{tikzpicture}
& \overset{\varphi}{\xrightarrow{\hspace{1 in}} }

&\begin{tikzpicture}
	\node[draw,circle, inner sep=2pt,fill] at (.25,0) {};
	\node[draw,circle, inner sep=2pt,fill] at (.5,-.5) {};
	\node[draw,circle, inner sep=2pt,fill] at (0,-.5) {};

	\draw[ thick] (.25,0) -- (0,-.5);
	\draw[ thick] (.25,0) -- (.5,-.5);

\end{tikzpicture}

\\
\SL(2,\mathbb Q) & &\SL(2,\mathbb C) &  \\
\theta = \intone&&
\end{matrix}
$ 
\caption{Generalized complexification of $\SL(2,\Q)$, $\theta=\Int\usebox{\smlmat}$}
\label{sl2q inn1 gc}
\end{center}

\end{figure}

\begin{figure}

\begin{center}
$\begin{matrix}
\begin{tikzpicture}
	\node[draw,circle, inner sep=2pt,fill] at (0,0) {};
	\node[draw,circle, inner sep=2pt,fill] at (.5,0) {};
	\node[draw,circle, inner sep=2pt,fill] at (1,0) {};
	\node[draw,circle, inner sep=2pt,fill] at (1.5,0) {};
	\node[draw=none] (ellipsis1) at (2,-.25) {$\cdots$};
	\node[draw=none] (ellipsis1) at (-.5,-.25) {$\cdots$};

	\node[draw,circle, inner sep=2pt,fill,color=white] at (0,-.5) {};

\end{tikzpicture}
& \overset{\varphi}{\xrightarrow{\hspace{1 in}} }

&\begin{tikzpicture}
	\node[draw,circle, inner sep=2pt,fill] at (.25,0) {};
	\node[draw,circle, inner sep=2pt] at (.5,-.5) {};
	\node[draw,circle, inner sep=2pt] at (0,-.5) {};

	\draw[ thick, dashed] (.25,0) -- (0,-.5);
	\draw[ thick, dashed] (.25,0) -- (.5,-.5);

\end{tikzpicture}

\\
\SL(2,\Q) & &\SL(2,\mathbb C) &  \\
\theta = \Int\left(\begin{smallmatrix} 0 & 1 \\ -1 & 0 \end{smallmatrix}\right)&&
\end{matrix}
$ 
\caption{Generalized complexification of $\SL(2,\Q)$, $\theta=\Int\usebox{\smlmatneg}$}
\label{sl2q inn-1 gc}
\end{center}

\end{figure}

To illustrate the dependence on the base field $k$, consider $k = \mathbb Q(i)$. In this case we obtain surjectivty for $\theta = \Int\left(\begin{smallmatrix} 0 & 1 \\ -1 & 0 \end{smallmatrix}\right)$ since in this case $H=G^{\theta}$ is a torus which splits over $k$.

Given a set $S \subset \Aut(G)$, we say two involutions $\theta$, $\sigma \in \Aut(G)$ are $S$-isomorphic if there exists $\gamma \in S$ such that $\theta = \gamma \sigma \gamma^{-1}$.  In this case we say that $\theta$ and $\sigma$ are $S$-isomorphic by $\gamma$. The double coset decomposition is dependent only on the isomorphy class of involution.

\begin{prop}
\label{fixed pts}
Let $\theta$, $\sigma$ be involutions that are $\Aut(G)$-isomorphic by $\gamma$, and denote their fixed point groups $H_1 = G^{\theta}$ and $H_2 = G^{\sigma}$.  Then $H_2 = \gamma^{-1}(H_1)$.
\end{prop}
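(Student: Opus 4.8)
The plan is to prove the set equality $H_2 = \gamma^{-1}(H_1)$ by a direct double inclusion, using only the defining relation $\theta = \gamma\sigma\gamma^{-1}$ in $\Aut(G)$ together with the fact that $\gamma$ is a bijection of $G$. It is convenient to also record the equivalent form $\sigma = \gamma^{-1}\theta\gamma$, since one of the two inclusions is cleanest in that form. Throughout, $\gamma^{-1}(H_1)$ is understood as the image of $H_1$ under the automorphism $\gamma^{-1}$, which, because $\gamma$ is an automorphism, equals $\{ g \in G \st \gamma(g) \in H_1 \}$.

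First I would take $g \in H_2$, so $\sigma(g) = g$, and show $\gamma(g) \in H_1$. Indeed, $\theta(\gamma(g)) = (\gamma\sigma\gamma^{-1})(\gamma(g)) = \gamma(\sigma(g)) = \gamma(g)$, so $\gamma(g)$ is $\theta$-fixed, i.e.\ $g \in \gamma^{-1}(H_1)$. This gives $H_2 \subseteq \gamma^{-1}(H_1)$. Conversely, I would take $g \in G$ with $\gamma(g) \in H_1$, so $\theta(\gamma(g)) = \gamma(g)$, and apply $\gamma^{-1}$: $\sigma(g) = (\gamma^{-1}\theta\gamma)(g) = \gamma^{-1}\big(\theta(\gamma(g))\big) = \gamma^{-1}(\gamma(g)) = g$, so $g \in H_2$, which gives $\gamma^{-1}(H_1) \subseteq H_2$. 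Combining the two inclusions yields $H_2 = \gamma^{-1}(H_1)$.

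There is no genuine obstacle here; the only points requiring care are the direction of the conjugation and the distinction between the image $\gamma^{-1}(H_1)$ and a preimage — these coincide precisely because $\gamma$ is an automorphism. One could also phrase the entire argument as the general fact that for any $\alpha \in \Aut(G)$ and any involution $\tau$ of $G$ one has $G^{\alpha\tau\alpha^{-1}} = \alpha(G^{\tau})$, which is immediate from $(\alpha\tau\alpha^{-1})(\alpha(x)) = \alpha(\tau(x))$ for all $x \in G$; the proposition is then the special case $\alpha = \gamma$, $\tau = \sigma$, using $\theta = \gamma\sigma\gamma^{-1}$ and hence $H_1 = G^{\theta} = \gamma(G^{\sigma}) = \gamma(H_2)$, i.e.\ $H_2 = \gamma^{-1}(H_1)$.
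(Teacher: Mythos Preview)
Your proof is correct and follows essentially the same approach as the paper: the paper verifies $\gamma^{-1}(H_1)\subset H_2$ by the computation $\sigma(\gamma^{-1}(h))=\gamma^{-1}\theta\gamma(\gamma^{-1}(h))=\gamma^{-1}(h)$ and then appeals to the bijectivity of $\gamma$ for the reverse inclusion, whereas you write out both inclusions explicitly. The only difference is cosmetic --- your version is slightly more detailed, but the underlying argument is identical.
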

\begin{proof} Let $h \in H_1$.  Then $\sigma(\gamma^{-1}(h)) = \gamma^{-1} \theta \gamma(\gamma^{-1}(h)) = \gamma^{-1} \theta(h) = \gamma^{-1}(h)$.  Therefore $\gamma^{-1}(H_1) \subset H_2$.  Since $\gamma$ is one-to-one, we have that $H_2 = \gamma^{-1}(H_1)$.
\end{proof}

\begin{cor}
\label{fixed pts cor}
Assume the hypotheses of Proposition \ref{fixed pts}. If $\theta, \ \gamma$ are $\Int(G)$-isomorphic, then $H_1, \ H_2$ are conjugate.
\end{cor}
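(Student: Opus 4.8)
The plan is to obtain the corollary as a direct specialization of Proposition \ref{fixed pts}, feeding that proposition an \emph{inner} automorphism in the role of $\gamma$. Concretely, the hypothesis that $\theta$ and $\sigma$ are $\Int(G)$-isomorphic means there is a $g \in G$ with $\theta = \Int(g)\,\sigma\,\Int(g)^{-1}$; in particular $\theta$ and $\sigma$ are $\Aut(G)$-isomorphic by the element $\Int(g) \in \Aut(G)$. The first step is therefore to apply Proposition \ref{fixed pts} to the pair $(\theta,\sigma)$ with this choice of isomorphism, which gives $H_2 = \Int(g)^{-1}(H_1)$.

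The second and final step is the elementary observation that an inner automorphism acts on subgroups by conjugation: since $\Int(g)^{-1} = \Int(g^{-1})$, we get
\[
H_2 = \Int(g^{-1})(H_1) = g^{-1} H_1 g,
\]
so $H_1$ and $H_2$ are conjugate in $G$, as claimed. No nontrivial computation is involved; everything rests on Proposition \ref{fixed pts} already having been established.

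The one point worth flagging is that the automorphism $\gamma$ appearing in the blanket hypotheses of Proposition \ref{fixed pts} need not coincide with the inner automorphism $\Int(g)$ coming from the $\Int(G)$-isomorphy assumption. This causes no difficulty, because Proposition \ref{fixed pts} is valid for \emph{any} $\Aut(G)$-isomorphism carrying $\theta$ to $\sigma$, and we are at liberty to apply it with the inner one; the arbitrary $\gamma$ from the hypotheses plays no role here. Thus there is no real obstacle to overcome — the corollary is precisely the case $\gamma \in \Int(G)$ of the proposition, restated in terms of $G$-conjugacy, and it records the fact that the double coset decomposition depends only on the $\Int(G)$-isomorphy class of the involution.
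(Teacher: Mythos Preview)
Your argument is correct and coincides with the paper's own proof, which is simply ``Apply Proposition \ref{fixed pts} to the case $\gamma = \Int(g)$ for some $g \in G$.'' Your additional remark about being free to choose the inner $\gamma$ rather than the one from the blanket hypotheses is a reasonable clarification, but the approach is identical.
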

\begin{proof}Apply Proposition \ref{fixed pts} to the case $\gamma = \Int(g)$ for some $g \in G$.
\end{proof}

\begin{thm}Let $\theta, \ \sigma \in \Aut(G)$ be $\Int(G)$-isomorphic involutions.  If $H_1 = G^{\theta}$ and $H_2 = G^{\sigma}$, then there exist Borel subgroups $B_1$ and $B_2$ such that $B_1 \backslash G \slash H_1 \cong B_2 \backslash G \slash H_2$.  Furthermore, $B_1$ and $B_2$ are $G$-conjugate.
\end{thm}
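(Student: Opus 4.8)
The plan is to reduce the statement to transport of structure along the inner automorphism that implements the $\Int(G)$-isomorphy. First I would unwind the hypothesis: there is a $g \in G$ with $\theta = \Int(g)\,\sigma\,\Int(g)^{-1}$, so $\theta$ and $\sigma$ are $\Aut(G)$-isomorphic by $\gamma = \Int(g)$, and Proposition \ref{fixed pts} (equivalently Corollary \ref{fixed pts cor}) gives $H_2 = \gamma^{-1}(H_1) = \Int(g^{-1})(H_1) = g^{-1}H_1 g$. Set $\psi := \Int(g^{-1}) \in \Aut(G)$, so that $\psi(x) = g^{-1}xg$ and $\psi(H_1) = H_2$. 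Fix an arbitrary Borel subgroup $B_1$ of $G$ and put $B_2 := \psi(B_1) = g^{-1}B_1 g$. Since $\psi$ is inner it carries Borel subgroups to Borel subgroups, so $B_2$ is a Borel subgroup; it is $G$-conjugate to $B_1$ by construction, which already yields the last assertion of the theorem.

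It remains to see that $\psi$ descends to a bijection of double coset spaces. As $\psi$ is a group automorphism of $G$ with $\psi(B_1) = B_2$ and $\psi(H_1) = H_2$, for every $x \in G$ one has $\psi(B_1 x H_1) = B_2\,\psi(x)\,H_2$, so the rule $B_1 x H_1 \mapsto B_2\,\psi(x)\,H_2$ is a well-defined map $\bar\psi : B_1 \backslash G \slash H_1 \to B_2 \backslash G \slash H_2$; the map induced by $\psi^{-1} = \Int(g)$ is its two-sided inverse. Hence $\bar\psi$ is a bijection and $B_1 \backslash G \slash H_1 \cong B_2 \backslash G \slash H_2$.

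To upgrade this bijection to an isomorphism compatible with the structure carried by these orbit sets, I would note that $\psi$ is an automorphism of the algebraic group $G$, hence a Zariski homeomorphism, so it sends closures of double cosets to closures of double cosets; therefore $\bar\psi$ is an isomorphism for the Bruhat order of Section 2 and the associated $I$-poset. In the same way $\psi$ matches the auxiliary data in Theorem \ref{springerthm}: it sends $\theta$-stable maximal tori to $\sigma$-stable maximal tori and $H_1$-conjugacy classes to $H_2$-conjugacy classes, and it intertwines $W_G(T)\slash W_{H_1}(T)$ with $W_G(\psi(T))\slash W_{H_2}(\psi(T))$, so all of the equivalent models of the double coset space correspond under $\bar\psi$.

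I do not expect a genuine obstacle; the only care needed is bookkeeping --- keeping the left/right conventions for $\Int$-isomorphy, for the side on which $H$ acts, and for $\tau$ consistent --- together with the observation that taking $B_1$ arbitrary is legitimate because the theorem only asserts existence of a pair $(B_1, B_2)$. If one wishes $B_1$ to be adapted to $\theta$ (say, containing a $\theta$-stable maximal torus with the openness properties used in Section 2), that adaptedness is automatically transported to $B_2$ relative to $\sigma$ by $\psi$, so no further argument is required.
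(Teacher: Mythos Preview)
Your argument is correct and is essentially the same as the paper's: both choose $g$ with $\theta=\Int(g)\,\sigma\,\Int(g)^{-1}$, invoke Corollary~\ref{fixed pts cor} to get $H_2=g^{-1}H_1g$, set $B_2=g^{-1}B_1g$, and observe that $\Int(g^{-1})$ carries $B_1xH_1$ to $B_2(g^{-1}xg)H_2$ with inverse $\Int(g)$. Your additional remarks about compatibility with the Bruhat order and the Weyl-group models go beyond what the paper records but are harmless and correct.
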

\begin{proof}Suppose $\theta = \Int(g) \sigma \Int(g)^{-1}$.  From Corollary \ref{fixed pts cor} we have that $H_2 = g^{-1}H_1g$.  Let $B_2 = g^{-1} B_1 g$.  Given a double coset $B_1xH_1 \in B_1 \backslash G \slash H_1$, we compute $\Int(g^{-1})(B_1xH_1) = \Int(g^{-1})(B_1)\Int(g^{-1})(x)\Int(g^{-1})(H_1) = B_2 g^{-1}xg H_2 \in B_2 \backslash G \slash H_2$.  The inverse map is given by $\Int(g)$, so we have $B_1 \backslash G \slash H_1 \cong B_2 \backslash G \slash H_2$.
\end{proof}

The isomorphy of the double coset decompositions extends to $k$-isomorphy, using the same proofs.
\begin{thm}Let $\theta, \ \sigma \in \Aut(G,G_k)$ be $\Int(G,G_k)$-isomorphic involutions with fixed point groups $H_1$ and $H_2$ respectively.  Then there exist Borel subgroups $B_1$ and $B_2$ such that $(B_1)_k \backslash G_k \slash (H_1)_k \cong (B_2)_k \backslash G_k \slash (H_2)_k$.
\end{thm}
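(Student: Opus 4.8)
The plan is to mimic, essentially verbatim, the proof of the preceding theorem, tracking $k$-rationality at each step. Write $\theta = \Int(g)\sigma\Int(g)^{-1}$ with $g \in G$ such that $\Int(g)$ leaves $G_k$ invariant (this is what being $\Int(G,G_k)$-isomorphic provides); in particular $\Int(g^{-1})$ restricts to a bijection of $G_k$ onto itself. First I would apply Proposition~\ref{fixed pts} with $\gamma = \Int(g)$ to get $H_2 = \gamma^{-1}(H_1) = g^{-1}H_1 g$ as algebraic groups. Intersecting both sides with $G_k$ and using $\Int(g^{-1})(G_k) = G_k$ yields $(H_2)_k = \Int(g^{-1})\bigl((H_1)_k\bigr)$.

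Next I would fix an arbitrary Borel subgroup $B_1$ of $G$ and set $B_2 = g^{-1}B_1 g$; this is again a Borel subgroup, $G$-conjugate to $B_1$, and exactly as above $(B_2)_k = \Int(g^{-1})\bigl((B_1)_k\bigr)$. Then $\Int(g^{-1})\colon G_k \to G_k$ sends a double coset $(B_1)_k\,x\,(H_1)_k$ to $\Int(g^{-1})\bigl((B_1)_k\bigr)\,\Int(g^{-1})(x)\,\Int(g^{-1})\bigl((H_1)_k\bigr) = (B_2)_k\,(g^{-1}xg)\,(H_2)_k$, so it descends to a well-defined map $(B_1)_k \backslash G_k \slash (H_1)_k \to (B_2)_k \backslash G_k \slash (H_2)_k$; the map induced by $\Int(g)$ is a two-sided inverse, giving the asserted bijection.

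The one point requiring care — and the only real obstacle — is the interchange of conjugation by $g$ with the passage to $k$-points: one needs $\Int(g)$, hence $\Int(g^{-1})$, to preserve $G_k$ so that $(g^{-1}B_1 g) \cap G_k = g^{-1}(B_1 \cap G_k)g$, and likewise for $H_1$. This is precisely the content of the hypothesis that $\theta$ and $\sigma$ are $\Int(G,G_k)$-isomorphic. Beyond this, nothing new enters: no Borel $k$-subgroups and no structure theory of $\mathscr P_k$ are required, since here $(B_i)_k$ denotes simply $B_i \cap G_k$, and the argument is formally identical to the algebraically closed case treated in the previous theorem.
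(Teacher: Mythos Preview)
Your proposal is correct and matches the paper's approach exactly: the paper states explicitly that ``the isomorphy of the double coset decompositions extends to $k$-isomorphy, using the same proofs,'' i.e., one repeats the previous argument with $\Int(g)$ now assumed to preserve $G_k$. Your identification of the only subtlety---that $\Int(g^{\pm 1})$ must preserve $G_k$ so that conjugation commutes with taking $k$-points---is precisely the point encoded in the $\Int(G,G_k)$-isomorphism hypothesis.
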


In light of Theorem \ref{pghk}, the double cosets $\bghk$ are parameterized by the $H_k$-conjugacy of $\theta$-stable maximal $k$-split tori.  The $H_k$-conjugacy classes have not been fully classified except in a number of specific cases, notably $\SL(2,k)$.  However, for algebraically closed fields a complete classification was achieved in \cite{tori1}.  Conveniently, the characterization of surjectivity of the generalized complexification map depends only on the $H$-isomorphy classes of $\theta$-stable maximal tori.

If we fix $\theta$-stable maximal $k$-split torus $A$, we can restrict the generalized complexification map \[ \varphi: \ds \bigcup_{i \in I} W_{G_k}(A_i) \slash W_{H_k}(A_i) \rightarrow \bigcup_{i \in I^{\prime}} W_G(A_i) \slash W_H(A_i) \] to the Weyl group quotient corresponding to $A$:
\begin{equation}
\label{phiA}
 \varphi_A: \ds  W_{G_k}(A) \slash W_{H_k}(A) \rightarrow  W_G(A) \slash W_H(A) 
\end{equation}
Then we can consider the surjectivity of $\varphi_A$.  The following lemma shows that this map is in fact surjective in all cases.

\begin{lemma} The map $\varphi_A$ of Equation \ref{phiA} is surjective.
\end{lemma}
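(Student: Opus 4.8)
The plan is to reduce the claim to the classical fact that in a $k$-split group every element of the absolute Weyl group has a $k$-rational representative in the torus normalizer, after which the surjectivity of $\varphi_A$ is immediate.

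First I would collect the simplifications afforded by the hypothesis that $G$ is $k$-split. In that case the $\theta$-stable maximal $k$-split torus $A$ is a maximal torus of $G$, so $Z_G(A) = A$ and hence $W_G(A) = N_G(A)/A$ and $W_{G_k}(A) = N_{G_k}(A)/A_k$; moreover the natural map $\iota : W_{G_k}(A) \to W_G(A)$ is injective, since its kernel is $(N_G(A)(k) \cap A)/A(k) = A(k)/A(k)$, which is trivial. Next --- and this is the only real content --- I would argue that $\iota$ is surjective, i.e.\ $W_{G_k}(A) = W_G(A)$. One clean route is Galois cohomology: from the exact sequence of $k$-groups $1 \to A \to N_G(A) \to W_G(A) \to 1$, in which $W_G(A)$ is a constant finite group scheme because $A$ is $k$-split (so $\mathrm{Gal}(\overline{k}/k)$ acts trivially on $X^*(A)$ and therefore on $W_G(A)$), one obtains the exact sequence $1 \to A(k) \to N_{G_k}(A) \to W_G(A) \to H^1(k,A)$; since $A$ is a split torus, $H^1(k,A) = 0$ by Hilbert's Theorem~90, so $N_{G_k}(A) \to W_G(A)$ is onto and $\iota$ is an isomorphism. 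Alternatively, and more in the spirit of the references used here, one fixes a Borel $k$-subgroup containing $A$ with associated basis $\Delta$ of $\Phi(A)$; for each $\alpha \in \Delta$ the rank-one subgroup generated by $U_\alpha$ and $U_{-\alpha}$ is defined over $k$ and contains a $k$-point normalizing $A$ that represents the reflection $s_\alpha$, and since $W_G(A)$ is generated by the $s_\alpha$ the claim follows.

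Finally I would deduce the lemma. The map $\varphi_A$ sends $gW_{H_k}(A)$ to $gW_H(A)$ and is well-defined because $W_{H_k}(A) \subseteq W_H(A)$. Given any coset $wW_H(A) \in W_G(A)/W_H(A)$, the previous step lets me lift $w$ to an element $\tilde w \in W_{G_k}(A)$ with $\iota(\tilde w) = w$; then $\varphi_A(\tilde w\, W_{H_k}(A)) = wW_H(A)$, so $\varphi_A$ is surjective. I expect the main (and essentially only) obstacle to be making the middle step watertight and correctly attributed: the equality $W_{G_k}(A) = W_G(A)$ for $k$-split $G$ is standard, but I want to be sure that the fact that $A$ enters here merely as a maximal $k$-split torus (rather than as part of a chosen pinning) hides no subtlety --- the Hilbert~90 argument above settles this uniformly --- and to note that the involution $\theta$ and the group $H$ play no role beyond the trivial inclusion $W_{H_k}(A) \subseteq W_H(A)$.
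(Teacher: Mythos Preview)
Your proposal is correct and follows exactly the same line as the paper's proof: note $W_{H_k}(A)\subset W_H(A)$, then lift any coset representative in $W_G(A)$ to $W_{G_k}(A)$. The only difference is that the paper simply asserts the lift exists (``$g$ has a representative in $W_{G_k}(A)$''), whereas you supply the justification via Hilbert~90 or simple-reflection generators; this is added rigor rather than a different argument.
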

\begin{proof} It is clear that $W_{H_k}(A) \subset W_H(A)$.  Given $gW_H(A) \in W_G(A) \slash W_H(A)$, the fiber is nonempty since $g$ has a representative in $W_{G_k}(A)$.
\end{proof}

Thus surjectivity of the map between indexing sets of $\theta$-stable maximal tori is sufficient to ensure surjectivity of the generalized complexification map.

Here we recall the general Cayley transform, whose construction is quite similar to the real case.  

Fix a maximal $(\theta,k)$-split torus $A$ of $G$.  Then $A$ has a root system $\Phi(A)$ and for each $\alpha \in \Phi(A)$ we can define the root group $G_{\alpha} = Z_G(\ker(\alpha))$.  Then the commutator $[G_{\alpha},G_{\alpha}]$ is a semisimple group of rank 1, isomorphic to $\SL(2,k)$.  Therefore $\theta|_{[G_{\alpha},G_{\alpha}]}$ is inner.  If $[G_{\alpha},G_{\alpha}]$ contains a nontrivial $\theta$-fixed torus $S$, we define a map $\eta = \Int\left( \begin{smallmatrix} 1 & \frac{-1}{2} \\ 1 & \frac{1}{2} \end{smallmatrix} \right)$ that acts on $S$.  As in the real case, $\eta$ maps $S$ to a $\theta$-split torus of $[G_{\alpha},G_{\alpha}]$. 

\begin{lemma}
\label{theta restr}
$S$ is $k$-split if and only if $\theta|_{[G_{\alpha},G_{\alpha}]} \cong \intone$
\end{lemma}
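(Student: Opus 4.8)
The plan is to analyze the rank-one group $M := [G_\alpha, G_\alpha] \cong \SL(2,k)$ together with the restricted involution $\theta|_M$, which we know from the discussion preceding the lemma is inner, say $\theta|_M = \Int(g)$ for some $g \in M$ (or more precisely $g \in M_{\bar k}$ normalizing the $k$-structure). The torus $S$ is a nontrivial $\theta$-fixed torus in $M$, so $S \subset M^{\theta|_M}$, and $S$ is one-dimensional since $M$ has rank $1$. The key observation is that in $\SL(2)$ the isomorphy type of an inner involution — up to $\Int(\SL(2,k))$-isomorphy, which by the theorems on $k$-isomorphy of orbit decompositions is what controls the structure — is determined by whether the element realizing it has eigenvalues that split over $k$ or generate a quadratic extension. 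Concretely, an inner involution of $\SL(2,k)$ is $\Int$ of a trace-zero element of $\GL(2,k)$; up to conjugacy this is either $\Int\left(\begin{smallmatrix} 1 & 0 \\ 0 & -1\end{smallmatrix}\right)$ (equivalently $\intone$, the "split" case) or $\Int\left(\begin{smallmatrix} 0 & 1 \\ -d & 0\end{smallmatrix}\right)$ for a non-square $d$ (the "anisotropic" case, e.g. $\intnegone$ when $-1$ is not a square).

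First I would identify $S$ as (the identity component of) the centralizer in $M$ of the element $g$ defining $\theta|_M$; equivalently $S$ is the unique maximal torus of $M$ fixed pointwise by $\theta|_M$. In the split model $\theta|_M = \Int\left(\begin{smallmatrix} 1 & 0 \\ 0 & -1\end{smallmatrix}\right) = \intone$ up to $\SL(2,k)$-conjugacy, the fixed torus is the diagonal torus, which is visibly $k$-split. Conversely, if $\theta|_M \cong \intnegone$ (the anisotropic inner involution), then the $\theta$-fixed torus is conjugate to $\left\{\left(\begin{smallmatrix} a & b \\ -b & a\end{smallmatrix}\right) : a^2 + b^2 = 1\right\}$ (or the analogue with $d$), which is a norm-one torus of a quadratic extension and hence is \emph{not} $k$-split — exactly the behavior seen in Example \ref{sl2 compute}(b). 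So the two directions reduce to: (i) $\theta|_M \cong \intone \Rightarrow S$ $k$-split, and (ii) $S$ $k$-split $\Rightarrow \theta|_M \cong \intone$, the latter being the contrapositive of "$\theta|_M \cong \intnegone$ (the only other inner isomorphy class) $\Rightarrow S$ not $k$-split."

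For (i): if $\theta|_M$ is $\Int(M_k)$-isomorphic to $\intone$, then by Proposition \ref{fixed pts} (applied inside $M$, with $\gamma = \Int(g)$) the fixed point groups are conjugate by an element of $M_k$, so $S$ is $M_k$-conjugate to the $\theta'$-fixed torus for $\theta' = \intone$, which is the diagonal $k$-split torus; conjugation by a $k$-rational element preserves $k$-splitness, so $S$ is $k$-split. For (ii): suppose $S$ is $k$-split. A $k$-split one-dimensional torus in $\SL(2,k)$ is, up to $\SL(2,k)$-conjugacy, the diagonal torus, and an inner involution fixing it pointwise must be $\Int$ of a diagonal trace-zero element, i.e. $\Int\left(\begin{smallmatrix} 1 & 0 \\ 0 & -1\end{smallmatrix}\right)$, which is $\SL(2,k)$-conjugate to $\intone$ (conjugate by $\left(\begin{smallmatrix} 1 & 1 \\ 1 & -1\end{smallmatrix}\right)$ or similar). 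Hence $\theta|_M \cong \intone$.

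The main obstacle I anticipate is bookkeeping about the precise notion of isomorphy: the "$\cong$" in the statement should be read as $\Int(M)$- or $\Int(M_k)$-isomorphy of involutions (the paper's convention from the preceding theorems), and one must be careful that the classification of inner involutions of $\SL(2,k)$ into the two classes "split" and "anisotropic" is indeed a classification up to \emph{$k$-rational} inner conjugacy, not just up to $\SL(2,\bar k)$-conjugacy (over $\bar k$ there is only one class). This is where the square-class obstruction enters, and it is exactly parallel to the $\R$ versus $\C$ dichotomy in Example \ref{sl2 compute}; I would cite the $\SL(2,k)$ classification (e.g. from \cite{beun} or the general machinery of \cite{helmwang}) rather than re-derive it, and then the lemma follows by matching the two cases.
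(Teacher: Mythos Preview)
Your proposal is correct and follows essentially the same approach as the paper: both reduce the question to the classification of $k$-involutions of $\SL(2,k)$, with the paper's proof consisting of the single sentence ``Since $[G_{\alpha},G_{\alpha}] \cong \SL(2,k)$, this follows directly from \cite{beunpaper}.'' Your write-up simply unpacks what that citation says---identifying the two $\Int(\SL(2,k))$-isomorphy classes of inner involutions and matching the $k$-splitness of the fixed torus to the class $\intone$---which is exactly the content being invoked.
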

\begin{proof}Since $[G_{\alpha},G_{\alpha}] \cong \SL(2,k)$, this follows directly from \cite{beunpaper}.
\end{proof}
Furthermore, $S$ is $k$-split implies $\eta(S)$ is also $k$-split.  We extend the action of $\eta$ to the rest of the torus trivially.

In order to discuss the surjectivity of the generalized complexification map we should have a description of the $I$-posets in the image of the generalized complexfication map, namely the $I$-posets for algebraically closed fields.  This was carried out in \cite{tori1}.  In particular, we know when $H$ contains a maximal torus.

\begin{thm} 
\label{alg closed ranks}
$\theta \in \Int(G)$ if and only if $\rank(G) = \rank(H)$.
\end{thm}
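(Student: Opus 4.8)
The plan is to prove the two implications separately, using in both directions the fact that $\theta$ — being an involution with $\operatorname{char} k \neq 2$ — is a semisimple automorphism, together with the standard structure theory of $\operatorname{Aut}(G)$.

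First I would treat $\theta \in \Int(G) \Rightarrow \rank(G) = \rank(H)$. Write $\theta = \Int(g)$ for some $g \in G$. Since $\theta^2 = \id$, we have $g^2 \in Z(G)$; because $Z(G)$ lies in every maximal torus it is diagonalizable, so $g^2$ is semisimple. Comparing Jordan decompositions, if $g = g_s g_u$ then $g^2 = g_s^2 g_u^2$ is the Jordan decomposition of $g^2$, forcing $g_u^2 = 1$; as $g_u$ is unipotent and $\operatorname{char} k \neq 2$, this gives $g_u = 1$, so $g$ is semisimple. A semisimple element of the connected group $G$ lies in a maximal torus $T$, and since $T$ is abelian $\Int(g)$ fixes $T$ pointwise, so $T \subseteq G^{\theta}$. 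Because $H$ is open in $G^{\theta}$ it has the same identity component, hence $\rank(H) = \rank(G^{\theta}) \geq \dim T = \rank(G)$; the reverse inequality is immediate.

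For the converse $\rank(G) = \rank(H) \Rightarrow \theta \in \Int(G)$, let $T$ be a maximal torus of $H$. Then $\dim T = \rank(H) = \rank(G)$, so $T$, a torus of $G$ of maximal dimension, is a maximal torus of $G$; and $\theta|_T = \id$ since $T \subseteq H \subseteq G^{\theta}$. It then remains to invoke the structural fact that an automorphism of a connected reductive group fixing a maximal torus pointwise is inner (by an element of $T$); see \cite{springer}, \cite{humphreys}. I would recall the mechanism: since $\theta|_T = \id$, $\theta$ acts trivially on $X^{*}(T)$, hence fixes every root and preserves each root subgroup $U_{\alpha}$, so it preserves a Borel $B \supseteq T$; it acts on each simple root subgroup $U_{\alpha_i}$ by a scalar $c_i \in \bar k^{\times}$, and choosing $s \in T$ with $\alpha_i(s) = c_i$ (possible because $\bar k^{\times}$ is divisible, so the map $T \to (\mathbb{G}_m)^{\Delta}$ given by the simple roots is surjective) the automorphism $\Int(s)^{-1}\theta$ fixes a pinning and is trivial on the based root datum, hence equals $\id$; thus $\theta = \Int(s) \in \Int(G)$.

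The main obstacle is exactly this last step of the converse: everything else is a few lines, but identifying automorphisms that are trivial on a maximal torus as inner is the one genuinely structural input. I would handle it by citing the classification of $\operatorname{Aut}(G)$ modulo $\Int(G)$ via the based root datum rather than redoing the Chevalley-relation bookkeeping, and I would make sure the divisibility argument absorbing the scalars $c_i$ into conjugation by an element of $T$ is stated explicitly, since that is the point where the hypothesis that $k = \bar k$ is used.
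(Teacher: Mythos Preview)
Your argument is correct and follows the standard route through the structure theory of $\Aut(G)$. There is nothing to compare it against, however: the paper does not prove this theorem. It is stated without proof as a known result imported from \cite{tori1} (the sentence immediately preceding the theorem reads ``This was carried out in \cite{tori1}.  In particular, we know when $H$ contains a maximal torus.''). So your proposal supplies a self-contained proof where the paper simply cites one.

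One minor remark on presentation: in the converse direction you might streamline the last paragraph by invoking directly the fact that $\Aut(G)/\Int(G)$ embeds in the outer automorphism group of the root datum, so that an automorphism acting trivially on $X^*(T)$ is automatically inner---this is exactly the content of the pinning argument you sketch, but stating it this way saves you from the bookkeeping with the scalars $c_i$ and the surjectivity of $T \to (\mathbb{G}_m)^{\Delta}$.
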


\begin{lemma}
\label{lemma:exist tk-split root}
Suppose $H$ contains a nontrivial $k$-split torus $S$.  Then $\Phi(S)$ consists of $(\theta,k)$-singular roots.
\end{lemma}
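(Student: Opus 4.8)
The plan is to fix an arbitrary $\alpha \in \Phi(S)$ and verify the two conditions in the definition of a $(\theta,k)$-singular root for it, using the hypothesis $S \subseteq H$ to force $\theta$-stability of an associated rank-one subgroup and then invoking Lemma~\ref{theta restr}. To begin, since $S \subseteq H = G^{\theta}$, the torus $S$ is fixed pointwise by $\theta$; consequently, for each $\alpha \in \Phi(S)$ the subtorus $\ker(\alpha) \subseteq S$ is $\theta$-fixed, so $G_{\alpha} = Z_G(\ker(\alpha))$ is a $\theta$-stable connected reductive $k$-group, and hence its derived group $M := [G_{\alpha},G_{\alpha}]$ is $\theta$-stable as well. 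Thus $\theta$ restricts to a $k$-involution of $M$, and $M \cap H$ contains the identity component of $M^{\theta}$.

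Next I would single out the subtorus $S_{\alpha} := (S \cap M)^{\circ}$. It is $k$-split, being a subtorus of the $k$-split torus $S$, and $\theta$-fixed, being contained in $H$; and it is nontrivial, because $\alpha$ is a nonzero character of $S$ that is trivial on $\ker(\alpha)$, so $S$ is not central in $G_{\alpha}$ and therefore meets $M$ in a positive-dimensional torus. To identify $M$ as a rank-one group having $S_{\alpha}$ as a maximal $k$-split torus, I would first enlarge $S$ to a $\theta$-stable maximal $k$-split torus $A$ of $G$: this is possible because $S$, being $\theta$-fixed, is central in the $\theta$-stable group $Z_G(S)$ and hence lies in a $\theta$-stable maximal $k$-split torus of $Z_G(S)$, which is maximal $k$-split in $G$. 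Then $\alpha$ is the restriction of a root of $A$, $\ker(\alpha)$ is a codimension-one subtorus of $A$, and the relative root system of $G_{\alpha} = Z_G(\ker(\alpha))$ is carried by the line through $\alpha$, so by the centralizer computations and the classification of $\theta$-stable tori in \cite{tori1} and \cite{tori2} the group $M$ is semisimple of $k$-rank one and isomorphic to $\SL(2,k)$, exactly as in Lemma~\ref{theta restr}, with $S_{\alpha}$ a maximal $k$-split torus.

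Granting this structure, the two conditions are immediate. For condition (b): $S_{\alpha}$ is a maximal $k$-split torus of $M$ and is contained in $M \cap H$, while every $k$-split torus of $M \cap H$ is a $k$-split torus of $M$, so $k\krank(M) = \dim S_{\alpha} = k\krank(M \cap H)$. For condition (a): apply Lemma~\ref{theta restr} to $M$ and $S_{\alpha}$ in the roles of $[G_{\alpha},G_{\alpha}]$ and $S$; since $S_{\alpha}$ is a nontrivial $\theta$-fixed torus of $M$ that is $k$-split, the lemma gives $\theta|_{M} \cong \intone$, and this involution possesses a $(\theta,k)$-split maximal torus — the image of $S_{\alpha}$ under the general Cayley transform $\eta$, which is $\theta$-split and, since $S_{\alpha}$ is $k$-split, again $k$-split — so $\theta|_{M}$ is split. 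Hence $\alpha$ is $(\theta,k)$-singular, and as $\alpha \in \Phi(S)$ was arbitrary the lemma follows.

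The step I expect to be the main obstacle is the identification in the second paragraph of $M = [G_{\alpha},G_{\alpha}]$ as a rank-one $k$-group with $S_{\alpha}$ maximal $k$-split, including the reduction that replaces the given torus $S$ by a $\theta$-stable maximal $k$-split torus containing it; this is where the centralizer structure and the classification of $\theta$-stable tori from \cite{tori1} and \cite{tori2} do the real work. Once that input is available, conditions (a) and (b) follow at once from Lemma~\ref{theta restr} and the $k$-splitness of $S$.
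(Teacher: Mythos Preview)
Your proof is correct and follows essentially the same route as the paper's: form $G_{\alpha}$, observe that $[G_{\alpha},G_{\alpha}]$ contains a nontrivial $\theta$-fixed $k$-split torus coming from $S$, invoke Lemma~\ref{theta restr} to identify $\theta|_{[G_{\alpha},G_{\alpha}]}$ with $\intone$, and then apply the Cayley transform $\eta$ to produce a $(\theta,k)$-split torus. The paper's argument is considerably terser---it simply asserts $\theta(\alpha)=\alpha$ and applies Lemma~\ref{theta restr} directly, without isolating $S_{\alpha}$, without the enlargement of $S$ to a $\theta$-stable maximal $k$-split torus, and without separately checking condition~(b)---so the step you flagged as the main obstacle (the rank-one identification of $M$ with $S_{\alpha}$ as its maximal $k$-split torus) is exactly what the paper takes for granted from the discussion preceding Lemma~\ref{theta restr}; your version just makes that implicit reduction explicit.
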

\begin{proof} Consider $\alpha \in \Phi(S)$, then $\theta(\alpha) = \alpha$.  Construct the corresponding root group $G_{\alpha} = Z_G(\ker(\alpha))^{\textdegree}$.  Then by Lemma \ref{theta restr} we have that $\theta|_{[G_{\alpha},G_{\alpha}]} \cong \intone$.  Therefore $[G_{\alpha},G_{\alpha}]$ contains a $\theta$-fixed $k$-split torus then can be flipped to a $(\theta,k)$-split torus in $[G_{\alpha},G_{\alpha}]$ via the Cayley transform $\eta$.  Therefore $\alpha$ is $(\theta,k)$-singular.
\end{proof}

\begin{cor}Let $S\subset H$ be a $k$-split torus.  There exists a maximal orthogonal subset of roots of torus lying in $H$ such that each root is $(\theta,k)$-singular.
\end{cor}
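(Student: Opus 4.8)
The plan is to read the corollary off Lemma~\ref{lemma:exist tk-split root}, the only extra ingredient being the elementary fact that a finite set of roots contains a maximal orthogonal subset. The torus ``lying in $H$'' need not be $S$ itself: enlarging $S$ to a maximal $k$-split torus $S'$ of $H$ containing it is harmless, since $S'$ is again a $k$-split torus contained in $H$ (so the hypotheses of both the corollary and Lemma~\ref{lemma:exist tk-split root} persist) and the root set only grows, $\Phi(S)\subseteq\Phi(S')$; I would argue with $S'$ so as to obtain as large an orthogonal family as possible. If $S'$ happens to be central in $G$ then $\Phi(S')=\emptyset$ and the empty set vacuously does the job, so I may assume $\Phi(S')\neq\emptyset$.

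First I would apply Lemma~\ref{lemma:exist tk-split root} to the nontrivial $k$-split torus $S'\subseteq H$, concluding that every root of $\Phi(S')$ is $(\theta,k)$-singular. Next I would build a maximal orthogonal subset $\Psi\subseteq\Phi(S')$, orthogonality being measured by the form $(\cdot,\cdot)$ from the definition of strongly orthogonal roots (the restriction of a Weyl-invariant inner product on the roots of a maximal torus of $G$ containing $S'$). Since $\Phi(S')$ is finite, $\Psi$ can be produced greedily: start from $\emptyset$ and repeatedly adjoin a root orthogonal to all those already chosen until none remains; the resulting set admits no further enlargement, i.e.\ it is a maximal orthogonal subset. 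Because $\Psi\subseteq\Phi(S')$, each of its roots is $(\theta,k)$-singular by the previous step, which is exactly the assertion.

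I do not anticipate a genuine obstacle: all of the content sits in Lemma~\ref{lemma:exist tk-split root} (hence ultimately in Lemma~\ref{theta restr} and the Cayley transform $\eta$), while the choice of $\Psi$ is purely combinatorial. The one point to keep in mind is that the statement asks only for an \emph{orthogonal} family, maximal relative to $\Phi(S')$; were one to want a strongly orthogonal family, or one maximal among the orthogonal sets of roots of all of $G$, one would additionally have to invoke the structure theorem for $G_\Psi$ attached to strongly orthogonal $(\theta,k)$-singular $\Psi$ together with the fact that $S'$ realises the full $k$-rank of $H$, so as to perform the Cayley transforms along $\Psi$ simultaneously --- but none of that is needed for the present statement.
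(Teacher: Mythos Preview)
Your proposal is correct and matches the paper's intent: the corollary is stated without proof and is meant to follow immediately from Lemma~\ref{lemma:exist tk-split root} together with the trivial observation that any finite set of roots admits a maximal orthogonal subset. Your additional care in enlarging $S$ to a maximal $k$-split torus of $H$ and handling the central case is more than the paper spells out, but the core argument is identical.
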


We can iterate this process, performing successive Cayley transforms in the root groups of a set of strongly orthogonal roots.  
\begin{lemma} 
\label{lemma:flipping}
 Assume $\rank(G)=n$ and let $S \subset H$ be a $k$-split torus of $H$ and suppose $\Psi(S) = \{ \alpha_1, \dots, \alpha_r \} \subset \Phi(S)$ is a maximal set of strongly orthogonal roots.  Then $G$ contains a $(\theta,k)$-split torus of dimension $n-r$.
\end{lemma}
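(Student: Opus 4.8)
The plan is to produce the torus by carrying out a single Cayley transform simultaneously at all of $\alpha_1,\dots,\alpha_r$, in direct analogy with the classical construction over $\R$, and then reading off the $\theta$-split part of the resulting maximal torus.

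First I would record the local picture at each root. Since $S\subseteq H$ it is fixed pointwise by $\theta$, so every root of $\Phi(S)$ is imaginary, and by Lemma \ref{lemma:exist tk-split root} each $\alpha_i$ is $(\theta,k)$-singular. Put $M_i:=[G_{\alpha_i},G_{\alpha_i}]$, which is semisimple of rank $1$; as $S$ is not central in $G_{\alpha_i}$, part (b) of $(\theta,k)$-singularity forces $k\krank(M_i)=k\krank(M_i\cap H)=1$, so $M_i\cong\SL(2,k)$ and $M_i\cap H$ contains a one-dimensional $k$-split torus $S_i$, necessarily $\theta$-fixed; Lemma \ref{theta restr} then gives $\theta|_{M_i}\cong\intone$, so the Cayley transform $\eta_i=\Int(c_i)$ with $c_i\in(M_i)_k$ carries $S_i$ onto a one-dimensional $(\theta,k)$-split torus $B_i:=\eta_i(S_i)\subseteq M_i$ (extended trivially off $M_i$). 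Because $\alpha_1,\dots,\alpha_r$ are strongly orthogonal, the computation behind the structure theorem for $G_\Psi$ above (\cite{tori2}) gives $[G_\Psi,G_\Psi]=\prod_{i=1}^r M_i$ with the $M_i$ pairwise commuting; hence $c:=c_1\cdots c_r\in G_k$ and $\eta:=\Int(c)=\eta_1\circ\cdots\circ\eta_r$ is an automorphism defined over $k$ with $\eta\bigl(\prod_i S_i\bigr)=\prod_i B_i$. Finally each $M_i$ is $\theta$-stable, since $\theta$ fixes $\ker(\alpha_i)\subseteq S$ pointwise and so normalizes $G_{\alpha_i}=Z_G(\ker(\alpha_i))$.

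Next I would pass to a maximal torus. Fix a $\theta$-stable maximal $k$-split torus $A$ of $G$ containing $\prod_i S_i$; as $G$ is $k$-split, $A$ is a maximal torus, so $\dim A=n$ and $\prod_i S_i\subseteq A^{+}$. Writing $A=\bigl(\prod_i S_i\bigr)\!\cdot A'$ with $A':=Z_A\bigl(\prod_i M_i\bigr)^{\circ}$, the torus $A'$ has dimension $n-r$, is $\theta$-stable, and is centralized by $c$. Hence $\eta(A)=\bigl(\prod_i B_i\bigr)\!\cdot A'$ is again a $\theta$-stable maximal $k$-split torus, and since each $B_i$ is $(\theta,k)$-split its $\theta$-split part is $\eta(A)^{-}=\bigl(\prod_i B_i\bigr)\!\cdot(A')^{-}$ --- a subtorus of the $k$-split torus $\eta(A)$, hence itself $(\theta,k)$-split --- of dimension $r+\dim(A')^{-}=n-\dim(A')^{+}$.

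It remains to see that $A$ can be chosen so that $\dim(A')^{+}\le r$, which is the step where maximality of $\Psi(S)$ (and not just strong orthogonality of the $\alpha_i$) is essential: $(A')^{+}$ is a $\theta$-fixed $k$-split torus of $H$ commuting with all the $M_i$, so by Lemma \ref{lemma:exist tk-split root} its roots in $G$ are $(\theta,k)$-singular, and --- choosing $A$ compatibly, or iterating the construction inside $Z_G\bigl(\prod_i M_i\bigr)$ as the paragraph preceding the lemma suggests --- these roots may be taken in $\Phi(S)$ and strongly orthogonal to $\alpha_1,\dots,\alpha_r$; were $\dim(A')^{+}>r$, one could then enlarge $\Psi(S)$ to a strictly larger strongly orthogonal subset of $\Phi(S)$, a contradiction. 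I expect this last point --- the compatible choice of $A$, equivalently the correct bookkeeping of the residual $\theta$-fixed directions --- to be the main obstacle; everything preceding it is routine manipulation with the commuting rank-$1$ subgroups $M_i$ and their Cayley transforms, exactly as over $\R$.
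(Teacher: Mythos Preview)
Your approach differs from the paper's: the paper argues by induction on $r$, at each step splitting off the one-dimensional piece ${}^{\alpha_1}\!S\subset[G_{\alpha_1},G_{\alpha_1}]$, writing $S=({}^{\alpha_1}\!S)\,\tilde S$ with $[G_{\alpha_1},G_{\alpha_1}]\cap\tilde S=\pm\id$, and then invoking the inductive hypothesis for the $k$-split torus $\tilde S\subset H$, for which $\{\alpha_2,\dots,\alpha_r\}$ is again a maximal strongly orthogonal set. You instead perform all $r$ Cayley transforms at once, using the product decomposition $[G_\Psi,G_\Psi]=\prod_i M_i$ available for strongly orthogonal sets.

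Everything through your second paragraph is sound; the gap is exactly where you locate it. After the transform you obtain $\eta(A)^{-}$ of dimension $n-\dim(A')^{+}$ and need $\dim(A')^{+}\le r$, but $(A')^{+}$ need not lie inside $S$, so maximality of $\Psi(S)$ \emph{within} $\Phi(S)$ gives no direct control over roots of $(A')^{+}$. Your own suggested repair, ``iterating the construction inside $Z_G\bigl(\prod_i M_i\bigr)$'', is precisely the induction the paper runs: one does not bound $\dim(A')^{+}$ in a single stroke but instead peels off one $\alpha_i$, drops the rank by one, and repeats. So the simultaneous Cayley transform is a clean repackaging of the commuting rank-one data, but the dimension count ultimately forces you back onto the paper's inductive bookkeeping; once you spell that iteration out, the two arguments coincide.
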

\begin{proof}  We use induction on $r$.  We may assume the $\Psi(S)$ consists of $(\theta,k)$-singular roots.  The case $r=1$ is carried out explicitly in the proof of Lemma \ref{lemma:exist tk-split root}.  Now assume $r > 1$ and let ${^{\alpha_1}S} \subset H$ be the subtorus lying in $[G_{\alpha_1},G_{\alpha_1}]$.  Then $S = ({^{\alpha_1}S})\tilde{S}$, where $\tilde{S} \subset S$ denotes the factor of $S$ such that $[G_{\alpha_1},G_{\alpha_1}]\cap \tilde{S} = \pm\id$.  Then $\tilde{S}$ is a $k$-split torus in $H$ so Lemma \ref{lemma:exist tk-split root} applies and $|\Psi(S)|=r-1$.
\end{proof}

\begin{lemma} Let $S$ be a maximal $k$-split torus of $H$.  Then $Z_{G}(S)$ contains a maximal $k$-split torus of $G$.
\end{lemma}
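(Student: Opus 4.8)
The plan is to find the required torus inside $Z_G(S)$ itself, using that $S$ is central in its own centralizer.

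First I would set $Z = Z_G(S)$ and recall the standard facts: since $G$ is connected reductive and $S$ is a $k$-torus, $Z$ is a connected reductive $k$-subgroup of $G$, and $S$ lies in the center of $Z$ by definition of the centralizer. Being a $k$-split torus of $Z$, $S$ is contained in some maximal $k$-split torus $A$ of $Z$; fix one such $A$. (Since $S$ is central and $k$-split it actually lies in the maximal $k$-split subtorus of the connected center of $Z$, so \emph{every} maximal $k$-split torus of $Z$ contains $S$, but a single choice suffices for us.)

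The main step is to verify that this $A$ is maximal not merely in $Z$ but in $G$. Suppose it were not, so that $A \subsetneq A'$ for some $k$-split torus $A'$ of $G$. As $A'$ is a torus it is commutative, hence it centralizes $A$, and in particular it centralizes $S \subseteq A$; therefore $A' \subseteq Z_G(S) = Z$, contradicting the maximality of $A$ among $k$-split tori of $Z$. Consequently $A$ is a maximal $k$-split torus of $G$ with $A \subseteq Z_G(S)$, which is the claim — and, as a bonus, it exhibits a maximal $k$-split torus of $G$ containing $S$, the form in which the result is likely to be used afterwards.

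I do not expect a serious obstacle here: the argument only invokes the standard facts that the centralizer of a $k$-torus in a connected reductive $k$-group is again connected, reductive, and defined over $k$, and that every $k$-split torus extends to a maximal one. The one place meriting care is the reduction step, namely checking that a hypothetical strictly larger $k$-split torus of $G$ is forced back inside $Z_G(S)$; this is immediate from commutativity of tori. Note also that the hypothesis ``$S$ maximal $k$-split in $H$'' plays no role in this proof — the statement holds verbatim for an arbitrary $k$-split torus $S$ of $G$ — so no input from the involution $\theta$ or the Cayley-transform machinery developed earlier is required.
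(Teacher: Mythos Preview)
Your argument is correct and is in fact the standard proof of this fact: a maximal $k$-split torus of the Levi $Z_G(S)$ is automatically maximal $k$-split in $G$, by exactly the commutativity trick you give. As you observe, the maximality of $S$ inside $H$ plays no role.

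The paper, however, argues differently. Rather than invoking the general Levi fact, it builds the torus by hand from the $\theta$-eigenspace decomposition: starting from $S$ it passes to the subtorus $A_1$ cut out by the roots orthogonal to $\Phi(S)$, projects to $G/H$ to locate a $\theta$-split piece $A_1^-$, and then exhibits $S\cdot A_1^-$ as the desired maximal $k$-split torus. So the paper's construction is tailored to the symmetric-variety setting and produces a torus that is visibly $\theta$-stable, already split as $T^+ \cdot T^- = S \cdot A_1^-$ with $T^+$ maximal $k$-split in $H$ and $T^-$ a $(\theta,k)$-split torus. Your proof is shorter, more robust, and applies to an arbitrary $k$-split $S$; the paper's route costs more but packages the output in the form used immediately afterwards in the proof of Theorem~\ref{main thm}, where one wants to compare $\Phi(T)$ with the restricted root system coming from the $\theta$-decomposition. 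If you keep your argument, you may want to add one line noting that since $S$ is $\theta$-fixed the centralizer $Z_G(S)$ is $\theta$-stable, and hence one may choose the maximal $k$-split torus $A\subset Z_G(S)$ to be $\theta$-stable as well (via Theorem~\ref{Pthetastable} applied inside $Z_G(S)$), so that the explicit $T^+T^-$ splitting is still available downstream.
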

\begin{proof} 

Let $S \subset H$ be a maximal $k$-split torus of $H$.  Consider $$A_1 = \displaystyle \left (\bigcap_{\substack{\alpha \in \Phi(T) \\ \alpha \perp \Phi(S)}} \ker(\alpha) \right)^{\circ}$$
Since the $-1$-eigenspace of $\theta$ is orthogonal to $H$, $\pi(A_1)$ (where $\pi$ is the usual projection) contains a maximal torus of $G/H$, let $A_1^-$ be the inverse image of this torus.  Then there exists a subtorus of $A_1^-$ that is maximal $k$-split in $G/H$.  Therefore $S\cdot A_1^-$ is a maximal $k$-split torus of $G$.
\end{proof}

\begin{lemma}
\label{preserve dim}
 Given a torus $S$, $\dim \varphi(S)^+ = \dim S^+$ and $\dim \varphi(S)^- = \dim S^-$
\end{lemma}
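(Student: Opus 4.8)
The plan is to unwind what $\varphi(S)$ means at the level of the $I$-poset and then to observe that none of the operations involved can change the dimensions of the $\pm$-eigenspaces. First I would recall that, because $G$ is $k$-split, a $\theta$-stable maximal $k$-split torus is automatically a $\theta$-stable maximal torus; hence in the description of $\varphi$ as a map $\bigcup_{i\in I} W_{G_k}(A_i)/W_{H_k}(A_i) \to \bigcup_{i\in I'} W_G(A_i)/W_H(A_i)$ both the source and the target are indexed by $\theta$-stable maximal tori, and the underlying map on indexing sets sends the $H_k$-conjugacy class of a torus $S$ to the $H$-conjugacy class of the \emph{same} torus $S$ --- this is precisely the correspondence $\{A_i\}_{i\in I}\leftrightarrow\{B_i\}_{i\in I'}$ recalled above. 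So one may take $\varphi(S)$ to be $S$ itself, or, if one picks a different representative of its $H$-conjugacy class, some $hSh^{-1}$ with $h\in H$.

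Second, I would observe that for a $\theta$-stable torus $S$ the subgroups $S^{+}=\{s\in S\colon\theta(s)=s\}^{\circ}$ and $S^{-}=\{s\in S\colon\theta(s)=s^{-1}\}^{\circ}$ are Zariski-closed subgroups of $S$, defined over $k$, whose formation commutes with the base change $k\to\overline k$ (they are identity components of kernels of morphisms of group schemes); in particular the product map $S^{+}\times S^{-}\to S$ remains a separable isogeny over $\overline k$, so $\dim S^{+}$ and $\dim S^{-}$ are the same whether computed over $k$ or over $\overline k$. Finally, if $\varphi(S)=hSh^{-1}$ with $h\in H=G^{\theta}$, then $h$ commutes with $\theta$, so $\theta$ restricted to $hSh^{-1}$ is the $\Int(h)$-conjugate of $\theta|_{S}$; hence $\varphi(S)^{\pm}=hS^{\pm}h^{-1}$, and since $\Int(h)$ is an isomorphism of algebraic groups we conclude $\dim\varphi(S)^{\pm}=\dim S^{\pm}$.

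I do not expect a genuine mathematical obstacle here: the only point requiring care is definitional bookkeeping, namely pinning down that the generalized complexification map, restricted to the $I$-poset of a $k$-split group, is realized on representatives by the identity on $\theta$-stable maximal tori, after which the statement reduces to the two elementary facts that extension of scalars and conjugation by an element of $H$ preserve $\theta$-stability together with the dimensions of the eigenspace subgroups. If one prefers to argue instead through the $V_k$-characterization of Theorem \ref{pghk}, the same conclusion follows: if $\tau(x)\in N_{G_k}(A)$ then $x^{-1}Ax$ is a $\theta$-stable maximal $k$-split torus attached to the orbit $B_k x H_k$, and it is attached, unchanged, to the image orbit $B x H$.
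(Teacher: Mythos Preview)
The paper states Lemma~\ref{preserve dim} without proof; it is treated as immediate from the definitions. Your argument is correct and supplies exactly the bookkeeping the paper omits: for a $k$-split group the induced map on the $I$-poset sends the $H_k$-class of a $\theta$-stable maximal $k$-split torus $S$ to the $H$-class of the same torus, and then both base change and conjugation by an element of $H$ preserve the dimensions of $S^{+}$ and $S^{-}$. There is nothing to compare approaches with, since the paper gives none; your write-up is a clean formalization of what the author evidently regarded as tautological.
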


\begin{thm}  
\label{main thm}
Let $G$ be a $k$-split group.  Then the generalized complexification map $\varphi$ is surjective if and only if $k\krank(H) = \rank(H)$.
\end{thm}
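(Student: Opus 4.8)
The plan is to reduce the statement to a question about the indexing sets of tori in Theorem \ref{pghk} and then treat the two implications separately. Since $G$ is $k$-split, a maximal $k$-split torus has dimension $k\krank(G)=\rank(G)$ and is thus a maximal torus, so the $\theta$-stable maximal $k$-split tori appearing in Theorem \ref{pghk} are $\theta$-stable maximal tori. Combining this with the fact (recorded before Equation \ref{phiA}) that $\varphi$ acts as the identity on the union of Weyl-group quotients and with the Lemma that each $\varphi_A$ of Equation \ref{phiA} is surjective, the image of $\varphi$ is exactly the union of those Weyl-group quotients indexed by $H$-conjugacy classes of $\theta$-stable maximal tori that contain a $k$-split torus. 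So $\varphi$ is surjective if and only if every $H$-conjugacy class of $\theta$-stable maximal torus of $G$ has a $k$-split representative, and it remains to show that this holds precisely when $k\krank(H)=\rank(H)$.

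For the forward implication I would use the ``most $\theta$-fixed'' class. Let $S_0$ be a maximal torus of $H$; then $Z_G(S_0)$ is connected, reductive, $\theta$-stable, and contains $S_0$ in its center, so by \cite{steinberg} it has a $\theta$-stable maximal torus $T_c$, which then satisfies $S_0\subseteq T_c^+$, whence $\dim T_c^+=\rank(H)$ by a dimension count (this is the minimal node of the $I$-poset; see also \cite{tori1}). If $\varphi$ is surjective then $[T_c]$ has a $k$-split representative: there are a $\theta$-stable maximal $k$-split torus $A$ and $h\in H$ with $A=hT_ch^{-1}$. Since conjugation by $h\in H\subseteq G^\theta$ commutes with $\theta$, Lemma \ref{preserve dim} gives $\dim A^+=\dim T_c^+=\rank(H)$, and $A^+$ is a subtorus of the $k$-split torus $A$ contained in $H$, hence a $k$-split torus of $H$ of dimension $\rank(H)$. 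Therefore $k\krank(H)\ge\rank(H)$, forcing equality.

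For the converse, assume $k\krank(H)=\rank(H)$. First I would exhibit a $k$-split representative of the most $\theta$-fixed class: by the standard-pair theorem there is a $\theta$-stable maximal $k$-split torus $S$ with $S^+$ a maximal $k$-split torus of $H$, and under the hypothesis $\dim S^+=k\krank(H)=\rank(H)$, so $S^+$ is a maximal torus of $H$ and $S$ lies in the most $\theta$-fixed class. Next, by the classification of $\theta$-stable maximal tori over $\overline{k}$ in \cite{tori1}, every $H$-conjugacy class of $\theta$-stable maximal tori is represented by $c_\Psi(S)$, where $\Psi=\{\beta_1,\dots,\beta_m\}\subset\Phi(S)$ is a set of strongly orthogonal imaginary roots with $\theta|_{[G_{\beta_i},G_{\beta_i}]}$ nontrivial (the ``noncompact imaginary'' roots) and $c_\Psi$ is the iterated Cayley transform, obtained by applying $\eta$ in each $[G_{\beta_i},G_{\beta_i}]$ (these commute by strong orthogonality). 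The key point is that each of these Cayley transforms is $k$-rational: because $S$ is $k$-split the coroot $\beta_i^\vee\colon\mathbb{G}_m\to S$ is defined over $k$, so $\beta_i^\vee(\mathbb{G}_m)$ is a $k$-split rank-one subtorus of $[G_{\beta_i},G_{\beta_i}]$ which is fixed pointwise by $\theta$ since $\beta_i$ is imaginary; as $\theta|_{[G_{\beta_i},G_{\beta_i}]}$ is a nontrivial involution its fixed torus is one-dimensional, hence equals $\beta_i^\vee(\mathbb{G}_m)$. Thus the $\theta$-fixed torus used to define $\eta$ in $[G_{\beta_i},G_{\beta_i}]$ is $k$-split, so (as noted after Lemma \ref{theta restr}) $\eta$ carries it to a $k$-split $(\theta,k)$-split torus; writing $S$, up to isogeny, as the product of the $\beta_i^\vee(\mathbb{G}_m)$ with $\bigl(\bigcap_i\ker\beta_i\bigr)^\circ$, it follows that $c_\Psi(S)$ is a product of $k$-split tori, hence $k$-split. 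So every $H$-conjugacy class of $\theta$-stable maximal tori has a $k$-split representative and $\varphi$ is surjective.

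The converse is the substantive half, and its core is the observation just made: for a $k$-split $\theta$-stable maximal torus the ``noncompact'' $\theta$-fixed subtori attached to its imaginary roots are automatically $k$-split, because they are coroot tori, and this is exactly what keeps the chain of Cayley transforms $k$-rational. The hypothesis $k\krank(H)=\rank(H)$ is then needed only to provide a $k$-split torus in the most $\theta$-fixed class from which the descent begins. The remaining work is routine bookkeeping, to be checked against \cite{tori1}: that the iterated Cayley transform through a strongly orthogonal set behaves as in the real case (the $\beta_i$ remain noncompact imaginary when the others are transformed, and the product decomposition of $S$ is compatible), and that $S$ and $T_c$, both of whose members $T$ satisfy $\dim T^+=\rank(H)$, indeed lie in the same $H$-conjugacy class.
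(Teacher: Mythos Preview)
Your proof is correct and follows essentially the same route as the paper: both reduce surjectivity of $\varphi$ to surjectivity on the $I$-poset via the lemma that each $\varphi_A$ is onto, handle the forward direction by looking at the most $\theta$-fixed torus class (the paper argues the contrapositive, you argue directly, but the content is identical via Lemma \ref{preserve dim}), and handle the converse by starting from a $\theta$-stable maximal $k$-split torus $S$ with $S^+$ maximal in $H$ and showing that the Cayley transforms along strongly orthogonal imaginary roots that generate the remaining $H$-classes preserve $k$-splitness. Your coroot argument---that for a $k$-split $S$ the one-dimensional $\theta$-fixed torus in $[G_{\beta_i},G_{\beta_i}]$ is the image of $\beta_i^\vee$ and hence $k$-split---is exactly the content the paper packages as ``$(\theta,k)$-singularity'' in Lemmas \ref{theta restr}, \ref{lemma:exist tk-split root}, and \ref{lemma:flipping}, so the two converses differ only in vocabulary, with yours being the more explicit of the two.
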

\begin{proof} First suppose $k\krank(H) \neq \rank(H)$.  Let $\{A_i\}_{i \in I}$ be a set of representatives for the $H_k$-conjugacy classes of $\theta$-stable maximal $k$-split tori and let $\{B_l\}_{l \in L}$ be a set of representatives for the $H$-conjugacy classes of $\theta$-stable maximal tori.  Choose $ A \in \{A_i\}_{i \in I}$ such that $A^+$ is maximal and $B \in \{B_l\}_{l \in L}$ such that $B^+$ is maximal.  Then by assumption $\dim B^+ > \dim A^+$.  In light of \ref{preserve dim}, $B$ can have no preimage under the induced complexification map on the torus poset.  Thus none of the orbits counted by $W_G(B) \slash W_H(B)$ are in the image of $\varphi$, so $\varphi$ is not surjective.

Next assume that $k\krank(H) = \rank(H)$.  Then let $S$ be a maximal $k$-split torus of $H$, and let $T\subset Z_G(S)$ be a maximal $k$-split torus of $G$.  Then the $\Phi(T)$ and and the restricted root system $\Phi_0(T)$ consist of the same roots, thus they have the same maximal orthogonal set of roots and all such roots are $(\theta,k)$-singular.  Thus we have surjectivity in the $I$-poset and thus $\varphi$ is surjective.
\end{proof}

\begin{cor}Let $G$ be a $k$-split group, suppose the generalized complexification map $\varphi: \bghk \rightarrow \bgh$ is surjective, and let $A \subset G$ be a $k$-split torus.  Define $G_1 = Z_G(A)$, $B_1 \subset G_1$ a Borel sugroup, and $H_1 = H \cap G_1$.  Then the restriction $\varphi|_{G_1}$ is surjective.
\end{cor}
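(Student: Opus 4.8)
The plan is to reduce the statement, via Theorem \ref{main thm}, to the assertion that $H_1$ is a $k$-split reductive group, and then to obtain that by transporting the root-theoretic information supplied by the surjectivity of $\varphi$ from $G$ down to the Levi subgroup $G_1=Z_G(A)$. At the outset I take $A$ to be $\theta$-stable, so that $G_1$ is $\theta$-stable and $\theta_1:=\theta|_{G_1}$ is a $k$-involution with fixed point group $(G_1)^{\theta_1}=G_1\cap H=H_1$; after conjugating the whole datum by an element of $G_k$ (which replaces it by an isomorphic one, hence changes nothing about surjectivity) I may also assume $A$ lies in a maximal $k$-split torus of $G$, which is a maximal torus because $G$ is $k$-split. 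Then $G_1$ is a connected reductive $k$-group containing a maximal torus of $G$, so $G_1$ is $k$-split. Since $\varphi$ is surjective and $G$ is $k$-split, Theorem \ref{main thm} gives $k\krank(H)=\rank(H)$; and Theorem \ref{main thm} applied to the $k$-split group $G_1$ reduces the corollary to showing $k\krank(H_1)=\rank(H_1)$, i.e.\ that $H_1$ is $k$-split.

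Next I would strip off the $\theta$-fixed part of $A$. Writing $A=A^+A^-$, one has $H_1=Z_H(A^+)\cap Z_H(A^-)$. Since $A^+\subseteq H$ is a $k$-split torus and $H$ is $k$-split, $A^+$ lies inside a maximal $k$-split torus of $H$ (a maximal torus of $H$), which centralizes $A^+$; hence $Z_H(A^+)$ is reductive with a $k$-split maximal torus, so $Z_H(A^+)$ is $k$-split. Replacing the triple $(G,\theta,H)$ by $Z_G(A^+)$ equipped with the restriction of $\theta$ and with fixed point group $Z_H(A^+)$ — again a $k$-split group with $k$-split fixed point group, now with central torus $A^-$ and with $G_1$ and $H_1$ unchanged — I am reduced to the case where $A=A^-$ is $(\theta,k)$-split.

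In this case $A$ is a central $(\theta,k)$-split torus of $M:=Z_G(A)$ and $H_1=M^\theta$. The surjectivity of $\varphi$, together with $G$ being $k$-split (so $\Phi(T)=\Phi_0(T)$ for a maximal $k$-split torus $T$ of $G$), gives — as in the proof of Theorem \ref{main thm}, using Lemma \ref{lemma:exist tk-split root} and Lemma \ref{lemma:flipping} — that the $\theta$-stable maximal $k$-split tori of $G$ are linked by Cayley transforms along $(\theta,k)$-singular roots, from a ``bottom'' torus $T$ (a maximal torus with $T^-$ maximal $(\theta,k)$-split) up to a ``top'' torus whose $\theta$-fixed part is a maximal torus of $H$, all over $k$. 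After conjugating $A$ by an element of $H_k$ I may assume $A\subseteq T^-$; then $T\subseteq M$ is a $\theta$-stable maximal $k$-split torus of $M$, still with $T^-$ maximal $(\theta,k)$-split in $M$, and the roots of $T^-$ occurring in $M$ are exactly those trivial on $A$. For such a root $\alpha$ one has $A\subseteq\ker(\alpha)$, hence $G_\alpha=Z_G(\ker(\alpha))=Z_M(\ker(\alpha))=M_\alpha$ and $[G_\alpha,G_\alpha]\subseteq M$, so $\alpha$ is $(\theta,k)$-singular relative to $M$ precisely when it is relative to $G$. Therefore the analogous chain for $M$ also runs over $k$, from $T$ up to a $\theta$-stable maximal $k$-split torus $T'$ of $M$ with $(T')^+$ a maximal torus of $M^\theta=H_1$; as $M$ is $k$-split, $T'$ is $k$-split, so $(T')^+$ is a $k$-split maximal torus of $H_1$, proving $H_1$ is $k$-split. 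Theorem \ref{main thm} applied to $G_1$ then finishes the argument.

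The step I expect to be the main obstacle is this last transfer: verifying that a maximal set of strongly orthogonal $(\theta,k)$-singular roots produced at the level of $G$ (via the surjectivity of $\varphi$) can be taken inside the root system of $M=Z_G(A)$, so that the Cayley-transform chain joining the bottom and top $\theta$-stable maximal tori of $M$ is genuinely realizable over $k$ and not merely over $\bar k$. The identity $G_\alpha=M_\alpha$ for roots trivial on $A$ is the key that makes this local transfer legitimate, reducing it to the root-theoretic input already behind Theorem \ref{main thm}. One should also double-check the innocuous reductions — $\theta$-stability of $A$, the $G_k$- and $H_k$-conjugations, and the identification $H_1=Z_H(A^+A^-)$ — but these are routine.
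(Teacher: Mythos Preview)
Your approach is genuinely different from the paper's. The paper argues directly at the level of double cosets: it observes that $G_1$ is connected reductive with $B_1=B\cap G_1$ for a suitable Borel $B$ of $G$, embeds each orbit $B_1gH_1$ into $BgH$, invokes surjectivity of $\varphi$ for $G$ to obtain a preimage in $B_k\backslash G_k/H_k$, and then intersects back with $(G_1)_k$. No appeal to Theorem~\ref{main thm} or to the root--theoretic machinery is made. Your route instead converts the problem, via Theorem~\ref{main thm} applied to $G_1$, into the structural statement that $H_1$ is $k$-split, and then tries to prove that by transporting Cayley--transform chains from $G$ to $M=Z_G(A)$. The reductions in your first two paragraphs (making $A$ $\theta$-stable, passing to $Z_G(A^+)$ to reduce to $A=A^-$) are sound and give a cleaner target than the paper's orbit argument: your method would actually \emph{explain} the corollary as a statement about $H_1$, whereas the paper's proof is a quick diagram chase.

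The difficulty is precisely where you flag it, and I do not think the observation $G_\alpha=M_\alpha$ closes it. That identity correctly says that $(\theta,k)$-singularity of a root of $M$ is the same whether computed in $M$ or in $G$. But the $(\theta,k)$-singular roots supplied by the proof of Theorem~\ref{main thm} for $G$ are roots of a maximal $k$-split torus $S$ of $H$; since your $A$ is $(\theta,k)$-split and $S$ is $\theta$-fixed, $A\cap S$ is finite and there is no reason those roots should vanish on $A$, i.e.\ lie in $\Phi(M)$. Conversely, the chain in $M$ over $\bar k$ climbs via \emph{real} roots of successive $\theta$-stable maximal tori $\tilde T$, and for a real root $\beta$ the $\theta$-fixed torus of $[G_\beta,G_\beta]$ need not be $k$-split (this is exactly the $\operatorname{Int}\left(\begin{smallmatrix}0&1\\c&0\end{smallmatrix}\right)$ obstruction, $c\notin(k^*)^2$), so $(\theta,k)$-singularity is not automatic from $\tilde T$ being $k$-split. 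In short, knowing the $G$-chain runs over $k$ does not, via $G_\alpha=M_\alpha$ alone, force the $M$-chain to run over $k$: you would still need an argument producing, inside $M$, a $\theta$-stable maximal $k$-split torus whose $\theta$-fixed part is a maximal torus of $H_1$ --- which is essentially the conclusion you are after. The paper sidesteps all of this by never leaving the orbit picture.
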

\begin{proof} $G_1$ is connected and reductive since $A$ is a $k$-split torus.  Moreover, $B_1$ is contained in a Borel subgroup of $G$, i.e. $B_1 = B \cap G_1$ for $B$ a Borel subroup, and therefore $(B_1)_k = (B \cap G_1)_k$.   Thus the orbits $(B_1) g (H_1) \in (B_1) \backslash (G_1) \slash (H_1)$ can be embedded in the orbits $\bghk$.  Therefore there is a preimage of $B_1 g H_1$ in $\bghk$.  Then $\varphi^{-1}(B_1 g H_1) \cap (G_1)_k$ is nonempty, so surjetivity is achieved.
\end{proof}

The surjectivity of the generalized complexification  map implies that there exists a decomposition of $G_k$ that is as far from the Iwasawa decomposition as possible.   We conclude with some examples.

\begin{eg}Let $G = \SL(n,\C)$, $G_{\R}= \SL(n,\R)$, $\theta(g) = (g^T)^{-1}$ for all $g \in G$.  Then $H_{\R} = \SO(2,\R)$, which is compact.  Therefore $\R\krank(H) = 0$, so $\varphi$ is not surjective.  In fact, from the Iwasawa decomposition we can deduce the generalized complexification diagram on the $I$-poset found in Figure \ref{slnr cartan}.
\begin{figure}[h]

\begin{center}
$\begin{matrix}
\begin{tikzpicture}
	\node[draw,circle, inner sep=2pt,fill] at (0,0) {};
	\node[draw,circle, inner sep=2pt,fill,color=white] at (0,-1) {};
\end{tikzpicture}
& \overset{\varphi}{\xrightarrow{\hspace{1 in}} }
&\begin{tikzpicture}
	\node[draw,circle, inner sep=2pt,fill] at (.25,0) {};
	\node[draw,circle, inner sep=2pt,fill] at (.25,-1) {};
	\draw[ thick, ] (.25,0) -- (.25,-1);
\end{tikzpicture}
\\
&&\vdots \\
&&\begin{tikzpicture}
	\node[draw,circle, inner sep=2pt,fill] at (.25,) {};
\end{tikzpicture}\\
\SL(n,\mathbb R) & &\SL(n,\mathbb C) &  \\
\end{matrix}
$ 
\caption{Generalized complexification of $\SL(n,\R)$, $\theta(g) = (g^T)^{-1}$}
\label{slnr cartan}
\end{center}
\end{figure}
\end{eg}

\begin{eg} Let $G_k = \SL(n,k)$ and $\theta = \Int(I_{n-i,i})$, where 
\[
I_{n-i,i}=
\begin{bmatrix}
J & 0 \\
0 & I_{n-2i}
\end{bmatrix},
\]
with $I_{n-2i}$ denoting the $n-2i \times n-2i$ identity matrix and 
\[
J= \begin{bmatrix}
0 & \cdots &  0 & 0 & 1 \\
0 & \cdots &  0 & 1 & 0 \\
\vdots & \iddots & \iddots & \iddots & \vdots \\
0& 1 & 0& \cdots & 0 \\
1&  0 & 0& \cdots&0
\end{bmatrix}.
\]
The fixed point group $H$ of $\theta$ consists of matrices of the form
$
\begin{bmatrix}
A&B\\C&D
\end{bmatrix}
$
where
$A$ has dimensions $2i \times 2i$ and satisfies $JAJ =A$, $B$ has dimenstions $2i \times n-2i$ and satisfies $JB = B$, $C$ has dimensions $n-2i \times 2i$ and satisfies $CJ=C$ and $D \in \GL(n-2i)$.  Then one checks that 
\[ P=
\begin{bmatrix}
a_1 & 0 & \cdots & \cdots &0 & b_1 \\
0& \ddots &&&\iddots & 0 \\
\vdots  &&a_i&b_i &&\vdots \\
\vdots  &&b_i&a_i &&\vdots \\
0& \iddots &&&\ddots & 0 \\
b_1 & 0 & \cdots & \cdots &0 & a_1 \\
\end{bmatrix}
\]
satisfies $JPJ = P$, and has eigenvalues $a_j \pm b_j$ for $j\in \{1,2,\dots,i\}$.  Then elements of the form
\[
\begin{bmatrix}
P & 0 & \cdots & 0 \\
0 & x_1 &&\vdots \\
\vdots & & \ddots & 0 \\
0& \cdots &0 & x_{n-2i}
\end{bmatrix},
\]
with $P$ as above and the $x_j$ elements of the field, form a torus.  This torus is $k$-split, and is maximal because its dimension is $n-1$.  Therefore $k\krank(H) = \rank(H)$, and thus the generalized complexification map is surjective for all choices of $n$, $i$, and $k$.

\end{eg}

\begin{eg}
Let $G_k = \SL(n,k)$ such that $n=2m$ is even.  Furthermore, let $\theta = \Int(L_{2m,x})$, where $ x \not\equiv 1 \mod (k^*)^2$, $L_{2m,x} = \diag (L_1, L_2, \dots , L_m)$, and 
\[
L_i =
\begin{bmatrix}
0&1\\x&0
\end{bmatrix}.
\]

The fixed point group of $\theta$ is given by $[A_{ij}]$, $i,j \in \{1,2, \dots, m\}$, where 
\[
A_{ij} =
\begin{bmatrix}
a_{ij} & b_{ij} \\ xb_{ij} & a_{ij}
\end{bmatrix}.
\]

A maximal $k$-split torus of $H$ is given by:
$$A =\left\{ \left. \diag\{ a_1 , a_1, \dots, a_{m}, a_{m}\} \ \right| \ a_1^2 \cdots  a_{m}^2 =1 \right\}.$$
Therefore we do not have surjectivity in these cases.  Consider the centralizer in $H_k$ of $A$:
$$Z_{H_k}(A) = \begin{pmatrix}
K_1 &0&\cdots & 0 \\
0 & K_2 &&\vdots \\
\vdots && \ddots &0 \\
0 & \cdots &0&K_{\frac{n}{2}}
\end{pmatrix}$$
where each $K_i$ is a $2\times 2$ matrix:
$$ \begin{pmatrix}
a_i & b_i \\
xb_i & a_i
\end{pmatrix}$$

Then $Z_{H_k}(A)$ is diagonalizable over $\tilde{k}=k(\sqrt{x})$ since its eigenvalues are $a_i \pm b_i\sqrt{x}$, $i=1, \ 2,\  \dots, \ \frac{n}{2}$.  Thus we have surjectivity of the generalized complexification over the quadratic extension field $\tilde{k}$.
\end{eg}

\bibliographystyle{alpha}
\bibliography{Biblio}


\end{document}